\definecolor{carminepink}{rgb}{0.92, 0.3, 0.26}
\newtheorem{theorem}{Theorem}
\newtheorem{lemma}{Lemma}
\newtheorem{corollary}{Corollary}
\newtheorem{remark}{Remark}
\newtheorem{condition}{Condition}
\newtheorem*{lemmafixed*}{Lemma 3.1 in \cite{dettedsphere}}
\DeclareSymbolFont{bbold}{U}{bbold}{m}{n}
\DeclareSymbolFontAlphabet{\mathbbold}{bbold}
\definecolor{bittersweet}{rgb}{1.0, 0.44, 0.37}
\newcommand*\diff{\mathop{}\!\mathrm{d}}
\newcommand\deq{\mathrel{\stackrel{\makebox[0pt]{\mbox{\normalfont\tiny d}}}{=}}}
\newcommand{\reals}{\mathbb{R}}					          
\newcommand{\integers}{\mathbb{Z}}
\newcommand{\Ex}{\mathbb{E}}				  
\newcommand{\Prob}{\mathbb{P}}
\newcommand{\alm}{a_{\ell, \mcoord}} 
\newcommand{\Y}{Y_{\ell, \mcoord}}    
\newcommand{\Yconj}{\bar{Y}_{\ell, \mcoord}}    
\newcommand{\Yconjprime}{\bar{Y}_{\ell^\prime, \mcoord^\prime}}
\newcommand{\indicator}[2]{\mathbbm{1}_{#1}\left(#2\right)}
\newcommand{\Mset}{\mathcal{M}_\ell}		
\newcommand{\Msetprime}{\mathcal{M}_{\ell^\prime}}
\newcommand{\mcoord}{\mathbf{m}}
\newcommand{\Sint}{\int_{\Sd}}
\newcommand{\primen}{\ell^{\prime}, \mcoord^{\prime}}
\newcommand{\ind}[1]{\mathbbold{1}{\bbra{#1}}}
\newcommand{\Ltwo}{{L^2\bra{\Sd}}}
\newcommand{\gegenbauer}[3]{C_{#1-#2}^{\left(#2+\frac{d-#3}{2}\right)}}
\newcommand{\sphcoord}{\left(\vartheta^{\left(1\right)},\ldots, \vartheta^{\left(d-1\right)},\varphi \right)}
\newcommand{\thetacord}[1]{\vartheta^{\left(#1\right)}}
\newcommand{\thetacoord}{\bm{\vartheta}}
\newcommand{\sphcoordshort}{\thetacoord,\varphi}
\definecolor{electricultramarine}{rgb}{0.25, 0.0, 1.0}
\newcommand{\qqqquad}{\quad \quad \quad \quad}
\newcommand{\alalm}{\tilde{a}_{\ell,\mcoord}}
\newcommand{\Ifunc}{I_{m_{j-1}, m_{j}}^{Q_{j-1}}\bra{m^\prime_{j-1}, m^\prime_{j}}} 
\newcommand{\Jfunc}{J_{m_{d-1}}^{Q_{d-1}}\bra{m_{d-1}^{\prime}}}
\newcommand{\Jfunczwei}{J_{m_{d-1}}^{2M}\bra{m_{d-1}^{\prime}}}
\newcommand{\peta}{	\eta\left(\ell, \mcoord;\ell +2s_0, \mcoord+2\mathbf{s}\right)}
\providecommand{\cc}[1]{\overline{#1}}
\providecommand{\abs}[1]{\left\vert#1\right\vert}
\newcommand{\revise}[1]{{#1}}
\providecommand{\bra}[1]{\left(#1\right)}			      
\providecommand{\sbra}[1]{\left[#1\right]}			    
\providecommand{\bbra}[1]{\left\{#1\right\}}			      
\newcommand{\Anorm}{h_{m_{j-1},m_{j};j}}
\newcommand{\Anormprime}{h_{m^\prime_{j-1},m^\prime_{j};j}}
\newcommand{\gegenbauergen}[2]{C_{#1}^{\left(#2\right)}}
\newcommand{\sumlm}{\sum_{\ell\geq 0}\sum_{\mcoord \in \Mset}} 	  
\newcommand{\sumshort}{\sum_{\ll,\mcoord}}
\newcommand{\Alias}{\tau\bra{\ell, \mcoord; \ell^\prime, \mcoord^\prime}}
\renewcommand{\ll}{\ell}
\newcommand{\Var}[1]{\operatorname{Var}\left(#1\right) }
 \newcommand{\Stwo}{\mathbb{S}^2}
  \newcommand{\Sd}{\mathbb{S}^d}
\newcommand{\spannung}{\operatorname{Span}}
\newcommand{\Cov}{\operatorname{Cov}}
\newcommand{\wpeso}[2]{w_{#1}^{\left(#2\right)}}
\begin{document}
\bibliographystyle{alpha}
\begin{frontmatter}
	\title{Aliasing effects for random fields over spheres of arbitrary dimension} 
	\runtitle{Aliasing effects for random fields over $\Sd$}
	
\begin{aug}
	\author{Claudio Durastanti \thanksref{t1}\ead[label=e1]{claudio.durastanti@gmail.com}}
	\address{Ruhr-Universit\"at  Bochum,  Faculty  of  Mathematics,  D-44780  Bochum,  Germany;\\ La Sapienza Università di Roma, Department of Basic and Applied Sciences for Engineering, 00161 Rome, Italy\\  
	\printead{e1}}
	
	\author{Tim Patschkowski \thanksref{t2}
	\ead[label=e2]{timpatschkowski@gmail.com}}
	\address{Ruhr-Universit\"at  Bochum,  Faculty  of  Mathematics,  D-44780 
		Bochum,  Germany\\
	\printead{e2}}
	
	\thankstext{t1}{C. Durastanti is partially supported by the Deutsche Forschungsgemeinschaft (\textit{GRK}  grant 2131: Ph\"anomene hoher Dimensionen in der Stochastik - Fluktuationen und Diskontinuit\"at); corresponding author.}
	
	\thankstext{t2}{T. Patschkowski is supported by the Deutsche Forschungsgemeinschaft (\textit{SFB} 823: Statistik nichtlinearer dynamischer Prozesse, Teilprojekt A1 and C1).}
	\runauthor{C. Durastanti \and T. Patschkowski}
\end{aug}

\begin{abstract}
In this paper, aliasing effects are investigated for random fields defined on the $d$-dimensional sphere $\Sd$ and reconstructed from discrete samples. First, we introduce the concept of an aliasing function on $\Sd$. The aliasing function \revise{allows one to} identify explicitly the aliases of a given harmonic coefficient in the Fourier decomposition. Then, we exploit this tool to establish the aliases of the harmonic coefficients approximated by means of the quadrature procedure named spherical uniform sampling. Subsequently, we study the consequences of the aliasing errors in the approximation of the angular power spectrum of an isotropic random field, the harmonic decomposition of its covariance function. Finally, we show that band-limited random fields are aliases-free, under the assumption of a sufficiently large amount of nodes in the quadrature rule. 
\end{abstract}

\begin{keyword}[class=MSC]
	\kwd{62M15}
	\kwd{62M40}
\end{keyword}

\begin{keyword}
	\kwd{spherical random fields} 
	\kwd{harmonic analysis} \kwd{Gauss-Gegenbauer quadrature}
	\kwd{Gegenbauer polynomials}
	\kwd{hyperspherical harmonics}
	\kwd{aliases}
	\kwd{aliasing function} \kwd{band-limited random fields}
\end{keyword}

\tableofcontents

\end{frontmatter}


\section{Introduction}\label{sec:intro} 
\subsection{Overview}\label{sub:motiv}
We are concerned with the study of the aliasing effects for the harmonic expansion of a random field defined on the $d$-dimensional sphere $\Sd$. A spherical random field $T$ is a stochastic process defined over the unit sphere $\Sd$ and thus depending on the location $x=\left(\thetacoord,\varphi\right) = \sphcoord \in \Sd$, where \revise{$\thetacord{i} \in \left[0,\pi \right]$, for $i =1,\ldots,d-1$, and $\varphi \in \left[\left.0,2\pi\right)\right.$}. \revise{Harmonic analysis} has been proved to be an insightful tool to study several issues related to \revise{random} fields on the sphere and the development of spherical random fields in a series of spherical harmonics has many \revise{applications} in several branches of probability and statistics. We are referring, for example, to the study of the asymptotic behavior of the bispectrum of spherical random fields (see \cite{m}), their Euler-Poincar\'e characteristic  (see \cite{cammar2}), the estimation of their spectral parameters (\cite{dlm}), and the development of quantitative central limit theorems for nonlinear functional of corresponding random eigenfunctions (see \cite{MaRossi}).  Under some integrability conditions \revise{on $T$} (see Section \ref{sub:random}), the following harmonic expansion holds:
\begin{equation*}
	T \left(\sphcoordshort\right) = \sumshort \alm \Y \left(\sphcoordshort\right) ,
\end{equation*} 
where $\ell\in \mathbb{N}$ and $\mcoord = \left(m_1,\ldots,m_{d-1} \right)\in \mathbb{N}^{d-2}\otimes \mathbb{Z}$ are the harmonic (or wave) numbers.\\ 
The set of spherical harmonics $\Y=Y_{\ell,m_{1},\ldots,m_{d-1}}:\Sd \rightarrow \mathbb{C}$ provides an orthonormal basis for the space $\Ltwo=L^2\bra{\Sd,\diff x}$, where $\diff x$ is the uniform Lebesgue measure over $\Sd$ (see Section \ref{sub:harm}).
The harmonic coefficients $\alm = a_{\ll,m_1,\ldots,m_{d-1}}$ \revise{are given by}
\begin{equation}\label{eq:alma}
	\alm = \langle T, \Y \rangle_{\Ltwo} =\Sint T\bra{x}\cc{\Y}\bra{x} \diff x,
\end{equation}
\revise {and contain} all the stochastic information of $T\left(\sphcoordshort\right)$.\\
Nevertheless, the explicit computation of the integral \eqref{eq:alma} is an unachievable target in many experimental situations. Indeed, the measurements of $T\left(\sphcoordshort\right)$ can be in practical cases collected only over a finite sample of locations $$\left\{x_i=\bra{\thetacoord_i, \varphi_i}\in \Sd:i=1\ldots N\right\}.$$ As a consequence, for any choice of $\ell$ and $\mcoord$ the integral producing the harmonic coefficient $\alm$ is approximated by the sum of finitely many elements $T\left(x_i\right)$, $i=1\ldots,N$, the samples of the random field. \revise{As well-known in the literature, an exact reconstruction of the harmonic coefficients by means of finite sums represents a reachable target when considering band-limited random processes. Band-limited random processes are characterized by a bandwidth $L_0$, so that all the harmonic coefficients for $\ell \geq L_0$ are null. A suitable choice of a sampling theorem and the cardinality of the sampling points yields the exact reconstruction for the non-null coefficients (see also, for example, \cite{optdesranfield,stoer}). Further details will be discussed in Section \ref{sec:bandlimited}.\\ However, if the random field is not band-limited or if the sampling theorem is not properly selected, the approximation of the integral in \eqref{eq:alma} by a finite sum can produce the so-called} aliasing errors, that is, different coefficients become indistinguishable - aliases - of one another (see, for example, \cite{optdesranfield,stoer}). The set of coefficients, acting as aliases \revise{of each other}, depends specifically on the chosen sampling procedure.\\
The concept of aliasing (also known as confounding) comes from signal processing theory and related disciplines. In general, aliasing makes different signals \revise{indistinguishable} when sampled, and it can be produced when the reconstruction of the signal from samples is different from the original continuous one (see, for example, \cite[Chapter 1]{signal}). \\
The aliasing phenomenon arising in the harmonic expansion of a $2$-dimensional spherical random field has been investigated by \cite{LiNorth}. On the one hand, \revise{band-limited random fields} over $\mathbb{S}^2$, which can be roughly viewed as linear combinations of finitely many spherical harmonics, can be uniquely reconstructed with a sufficiently large sample size. On the other \revise{hand}, an explicit definition of the aliasing function, a crucial tool to identify the aliases of a given harmonic coefficient, is developed when the sampling is based on the combination of a Gauss-Legendre quadrature formula and a trapezoidal rule (see Section \ref{sec:aliasing} for further details). In many practical applications, this sampling procedure is the most convenient scheme to perform numerical analysis over the sphere (see, for example, \cite{atkinson,stoer,szego}). Further reasons of interest to study the aliasing effects in $\mathbb{S}^2$ have arisen in the field of optimal design of experiments. In \cite{dettemelas}, designs over $\mathbb{S}^2$ based on this sampling scheme have been proved to be optimal with respect to the whole set of Kiefer's $\Phi_p$-criteria, presented in \cite{kiefer}, that is, they are the most efficient among all the approximate designs for regression problems with spherical predictors. \\
Recently, interest has occurred in regression problems in spherical frameworks of arbitrary dimension and the related discretization problems (see, for example, \cite{langschwab}). In particular, in \cite{dettedsphere}, \revise{experimental designs} obtained by the discretization of the uniform distribution over $\Sd$ by means of the combination of the so-called Gegenbauer-Gauss quadrature rules (see Section \ref{sub:gegengauss} for further details) and a trapezoidal rule, have been proved to be optimal with respect not only to the aforementioned Kiefer's $\Phi_p$-criteria, but also to another class of orthogonally invariant information criteria, the $\Phi_{E_s}$-criteria. 
Given the \revise{increasing interest for spheres of dimension larger than $2$ (see Subsection \ref{sub:appl} for further details)}, it is therefore pivotal to carry out further investigations into the aliasing effects for random fields sampled over $\Sd$, $d >2$. On the one hand, this research improves the understanding of the behavior of the approximated harmonic coefficients when computed over discrete samplings, in particular over a spherical uniform sampling (see Section \ref{sub:unifo}). On the other hand, our investigations make \revise{extensive} use of the properties of the hyperspherical harmonics, \revise{thus providing} a deeper insight on their structure, carrying on with the results presented in \cite{dettedsphere}. \\ 

In this paper, we work under the following assumption: a spherical random field $T$ is observed over \revise{a} finite set of locations $\left\{x_i\in\Sd:i=1,\ldots,N\right\}$, the so-called sampling points, associated to the weights $\left\{w_i:i=1,\ldots,N\right\}$. Thus, for any set of harmonic numbers $\ell$ and $\mcoord$, the approximated - or aliased - harmonic coefficient is given by 
\begin{equation*}
	\alalm = \sum_{\ell^\prime, \mcoord^\prime} \Alias a_{\ell^\prime, \mcoord^\prime},
\end{equation*}
where \revise{$\Alias$ is the aforementioned aliasing function and is given by 
\begin{equation*}
\Alias=  \sum_{i=1}^{N} w_{i}  {Y}_{\primen}\bra{\thetacoord_i, \varphi_i} \Yconj \bra{\thetacoord_i, \varphi_i}\prod_{j=1}^{d-1} \left(\sin \thetacord{j}_i\right)^{d-j} 
.
\end{equation*} 
Further details can be found in Section \ref{sub:alfunc}}. The coefficient $a_{\ell^\prime,\mcoord^\prime}$ is said to be an alias of $\alm$ with intensity $\abs{\Alias}$ if $\Alias\neq 0$.\\
First, we study the general structure of the aliasing function under the very mild assumption that the sampling \revise{scheme} is separable with respect to the angular coordinates, that is, the sampling points $\left\{x_i:i=1,\ldots,N\right\}$ can be written as follows
\begin{equation*}
\left\{\left(\thetacord{1}_{k_0}, \ldots,\thetacord{d-1}_{k_{d-2}},\varphi_{k_{d-1}} \right): k_{j-1}=0,\ldots,Q_{j-1}-1 \quad \text{for } j=1, \ldots,d\right\},
\end{equation*}  
where $Q_0,Q_1,\ldots,Q_{d-1} \in  \mathbb{N}$ are defined so that $\prod_{j=0}^{d-1}Q_j=N$ (see Section \ref{sub:sep1}). \revise{Heuristically, a sample scheme is separable if a different discretization procedure is developed for each distinct coordinate.} 
Then, we investigate on the explicit structure of \revise{this} function and, consequently, on the identification of aliases assuming a spherical uniform design as the sampling procedure.\\
Second, under the assumption of isotropy, we consider the aliasing effects for the angular power spectrum of a random field, which describes the decomposition of the covariance function in terms of the frequency $\ell\geq 0$ (see Section \ref{sub:random}), providing information on the dependence structure of the random field.\\ 
\noindent Third, we investigate also on the aliasing effects for band-limited random fields. More specifically, we establish suitable conditions on the sample size in order to guarantee the annihilation of the aliasing phenomenon.

\subsection{Some applications and further research}\label{sub:appl}
\revise{An accurate characterization of the aliasing phenomena has great significance from both the points of view of theoretical statistics and its practical applications. More specifically, the analysis of spherical random fields over $\Sd$ is strongly motivated by a growing set of applications in several scientific disciplines, such as Cosmology and Astrophysics for $d=2$ (see, for example, \cite{bm, marpec2}), as well as in Medical Image Analysis (\cite{4Dmed,4dmed2}), Material Physics (\cite{MaSchu}), and Nuclear Physics (\cite{averyavery}) for $d>2$.}\\
\revise{As already mentioned, aliasing phenomena can be detected in all the experimental situations where harmonic coefficients are measured by means of a discretization of the integral given by Equation \eqref{eq:alma}. In this case, the presence of aliases can bring some crucial disadvantages for the experimenter.\\ 
In the classical optimal design approach (see for example \cite{dettedsphere}), the construction of experiments concerning spherical data is very sensitive to the aliasing effects. The outcomes of these experiments can be indeed affected by the aliasing of some terms belonging to the experimental design with other ones, potentially important but not included in the chosen model (see, for example, \cite{optimalaliasing}). According for instance to \cite{optdesranfield}, in the construction of experimental designs for the regression of random fields, the experimenter can exploit a first-order regression model, where interactions and aliasing are not considered. On the one hand, these designs are optimal to estimate primary effects. On the other hand, they can still present some undesirable aliasing effects producing some alias-depending bias. In this case, the information on the aliasing effects for each term is developed by means of the aforementioned aliasing function. The intensities of the aliases can be then collected in the so-called alias matrix (see, for further details, \cite{optimalaliasing}). The alias matrix depends specifically on the experimental design; for further details, the reader is referred, for example, to \cite{goosjones}. The construction of optimal designs minimizing the alias-depending bias subject to constraints on design efficiency, in the sense of the aforementioned optimality criteria (see, again, \cite{dettedsphere}), is therefore a topic of extreme interest (see also \cite{optimalaliasing}).}\\
\revise{Hyperspherical random fields in $\Sd$ can be furthermore exploited to study random fields defined over the unit ball $\mathbb{B}^{d-1}$ in $\mathbb{R}^{d-1}$, which currently represent a very challenging topic in data analysis. On the one hand, random fields defined over the unit ball $\mathbb{B}^3$ are a very useful tool, aimed to generate realistic three-dimensional models from observational data in several research branches of Cosmology, and other disciplines, such as, for instance, Medical Brain Imaging, and Seismology (see, respectively, \cite{dfhmp,brima,lm} and the references therein). %
On the other hand, the construction of a cubature formula on the unit ball is a complicated task. Indeed, even if it is theoretically known that the cubature points must correspond to the zeroes of Bessel functions of increasing degrees, in practice these points are not explicitly calculable (see, for example, \cite{lm}).  As proved in \cite{pexu}, under some mild smoothing conditions, this issue can be overcome by linking the construction of frames and the definition of cubature formulas on $\mathbb{B}^{d-1}$ with the ones on $\Sd$. More specifically, orthogonal polynomials on the unit sphere an those on unit ball can be
related by the following map
\begin{equation}\label{eqn:pexu}
x \in \mathbb{B}^{d-1} \mapsto x^\prime:=\left(x,\sqrt{1-\abs{x}^2}\right) \in \mathbb {S}^d,
\end{equation}
linking the points in $\mathbb{B}^{d-1}$ with the one in the upper hemisphere of $\Sd$ (see \cite{pexu}[Equation (4.5)]). We can thus define a distance $\mathbb{B}^{d-1}$
\begin{equation*}
d_{\mathbb{B}^{d-1}}(x, y) = \arccos \left(\langle x,y \rangle + \sqrt{1-\abs{x}}\sqrt{1-\abs{x}}\right), \quad x,y \in\mathbb{B}^{d-1},
\end{equation*} 
which corresponds to the geodesic distance on $\Sd$. The map given by \eqref{eqn:pexu} provides also a connection between (weighted) $L^p$-spaces on $\mathbb{B}^{d-1}$ and $L^p\left(\Sd\right)$.
This allows one to study random fields over the unit ball by means of objects defined over spheres of higher dimension. The understanding of aliasing effects over $\Sd$ becomes crucial to produce useful measurements related thus to these random fields.\\
By the point of view of applications,} in Medical Image Analysis the statistical representation of the shape of a brain region is commonly modeled as the realization of a Gaussian random field, defined across the entire surface of the region (see for example \cite{brima}). Many shape modeling frameworks in computational anatomy apply shape particularization techniques for cortical structures based on the spherical harmonic representation, to encode global shape features into a small number of coefficients (see \cite{4Dmed}). This data reduction technique, however, can not provide a proper representation with a single parametrization of multiple disconnected sub-cortical structures, specifically the left and right hippocampus and amygdala. The so-called 4D-hyperspherical harmonic representation of surface anatomy aims to solve this issue by means of a stereographic projection of an entire collection of disjoint 3-dimensional objects onto the hypersphere of dimension 4. Indeed, \revise{as aforementioned,} a stereographic projection embeds a 3-dimensional volume onto the surface of a 4-dimensional hypersphere, avoiding thus, the issues related to flatten 3-dimensional surfaces to the 3-dimensional sphere. Subsequently, any disconnected objects of dimension 3 can be projected onto a connected surface in $\mathbb{S}^4$, and, thus, represented as the linear combination of hyperspherical harmonics of dimension $4$  (see \cite{4dmed2}).\\
\revise{Finally, further investigations can be done to study the aliasing effects arising when alternative sampling schemes to the Gauss-Gegenbauer quadrature are taken into account. For example, we refer to the so-called equiangular sampling schemes, which involve a uniform discretization of all the angular coordinates, introduced by \cite{skuk}, and then developed, among others, by \cite{healydriscoll,mcewenwiaux}. Another relevant sampling scheme concerns the decomposition of the hypersphere into Voronoi cells (see, for example, \cite{npw1}). This sampling scheme allows one to build the so-called spherical needlets, a class of spherical wavelets featuring a wide range of applications in Statistics (see, for example, \cite{bkmpAoS, dlmejs, durastanti6}). In view of these applications, the aliasing effects related to this sampling procedure are of vibrant interest.}
\subsection{Organization of the paper}
This paper is structured as follows. In Section \ref{sec:prel}, we introduce some \revise{fundamental results} on the harmonic analysis over the $d$-dimensional sphere as well as a short review \revise{of} spherical random fields. Section \ref{sec:sampling} includes \revise{a short} overview on the so-called Gegenbauer-Gauss quadrature formula, crucial to build a spherical uniform sampling, and provides some auxiliary results. 
In Section \ref{sec:aliasing}, we present the main findings of this work. In particular, Theorem \ref{thm:auxiliary} describes the construction of the aliasing function $\Alias$ under the assumption of the separability of the sampling with respect to the angular components, while Theorem \ref{thm:main} identifies the aliases for any harmonic coefficient $\alm$ when the sampling is uniform. In Section \ref{sec:powerspec}, we study the aliasing effects for the angular power spectrum of an isotropic random field (see Theorem \ref{thm:power}), while in Section \ref{sec:bandlimited} we provide an algorithm to remove the aliasing effects for a band-limited random field sampled over a spherical uniform design (see Theorem \ref{thm:band}). \revise{Section \ref{sec:example} presents an explanatory example, while Section \ref{sec:proofs} collects all the proofs.}

\section{Preliminaries}\label{sec:prel}
This section collects some introductory results, concerning harmonic analysis and its application to spherical random fields. It also includes a quick overview on the Gegenbauer-Gauss formula. The reader is referred to \cite{steinweiss,atkinson,vilenkin} for further details about the harmonic analysis on the sphere, to \cite{randomfields} for a detailed description of random fields and their properties, while \cite{MaPeCUP} provides an extended description of spherical random fields over $\Stwo$. Further details concerning  the Gegenbauer-Gauss quadrature rule can be found in \cite{handbook,atkinson,stoer,szego}.\\

\subsection{Harmonic analysis on the sphere}\label{sub:harm} Let $\vartheta^{\bra{i}} \in \left[0,\pi\right]$, for $i=1,\ldots,d-1$, and $\varphi \in \left[ \left. 0, 2 \pi \right) \right.$ be the spherical polar coordinates over $\Sd$. \revise{From now} on, we will denote by $x=\bra{\thetacoord,\varphi}=\bra{\thetacord{1},\ldots,\thetacord{d-1},\varphi}$ the generic spherical coordinate, that is, the direction of a point on $\Sd$. Let the function $f:\left[0,\pi \right]^{d-1}\rightarrow \left[-1,1\right]$ be defined by
\begin{equation}\label{eq:ffunc}
f\left(\thetacoord\right)=f\left(\thetacord{1},\ldots,\thetacord{d-1}\right)= \prod_{j=1}^{d-1} \left(\sin \thetacord{j}\right)^{d-j}. 
\end{equation}
Thus, the uniform Lebesgue measure $\diff x$ over $\Sd$, namely, the element of the solid angle, is defined by
\begin{align*}\diff x = & \left(\sin \thetacord{1}\right)^{d-1} \diff\thetacord{1}  \left(\sin \thetacord{2}\right)^{d-2} \diff \thetacord{2} \ldots \sin \thetacord{d-1} \diff \thetacord{d-1}   \diff \varphi\\
=& f\left(\thetacord{1},\ldots,\thetacord{d-1}\right) \diff \thetacord{1} \ldots \diff \thetacord{d-1} \diff\varphi,
\end{align*}
such that the surface area of the hypersphere corresponds to
\begin{equation*}
\int_{\Sd} \diff x = \frac{2\pi^{\frac{d+1}{2}}} {\Gamma\left(\frac{d+1}{2}\right)},
\end{equation*}
\revise{where $\Gamma$ denotes the Gamma function.}
Let us denote by $\mathcal{H}_\ell$ the restriction of the space of harmonic homogeneous polynomials of order $\ell$ to $\Sd$. As well-known in the literature (see, for example, \cite{atkinson,steinweiss}), the space of square-integrable functions over $\Sd$ can be described as the direct sum of the spaces $\mathcal{H}_\ell$, that is,
\begin{equation*}
	\Ltwo = \underset{\ell \geq 0}{\bigoplus} \mathcal{H}_\ell.
\end{equation*}
For any integer $\ell \geq0$, \revise{from now} on called frequency, we define the following set 
\begin{align}\notag
	\Mset = & \left\{\mcoord
		\in \mathbb{Z}^{d-1}: m_1=0, \ldots, \ell; m_2 = 0, \ldots,m_{1};\ldots;m_{d-2}= 0, \ldots,m_{d-3};\right. \\ 
		&\left. m_{d-1} = -m_{d-2}, \ldots,m_{d-2}\right\}.\label{eq:Mset}
\end{align}
Following \cite{averywen,atkinson,vilenkin}, for any $\ell \geq 0$, it holds that
\begin{equation*}
	\mathcal{H}_\ell = \spannung\!\bbra{Y_{\ell,\mcoord}:\mcoord\in \Mset},
\end{equation*}
where, for $x \in \Sd$, $\Y=Y_{\ell,m_1,\ldots,m_{d-1}}:\Sd \rightarrow \mathbb{C}$ denotes the so-called spherical - or hyperspherical - harmonic of degree $\ell$ and order $\mcoord$. In other words, fixed $\ell \geq 0$, $\Mset$ appoints the finitely many vectors $\mcoord$ which identify the spherical harmonics spanning the space $\mathcal{H}_\ell$. \\
Another common approach to introduce spherical harmonics exploits the so-called $d$-spherical Laplace-Beltrami operator $\Delta_{\Sd}$ (see, for example, \cite{MaPeCUP}).
Fixed $\ell \geq 0$, the spherical harmonics $\Y\left(x\right)$ corresponding to any $m \in \Mset$ are the eigenfunctions of $\Delta_{\Sd}$ with eigenvalue \revise{$-\varepsilon_{\ell;d}$}, where $\varepsilon_{\ell;d}=\ell\bra{\ell+d-1}$, that is,
\begin{equation*}
	\left( \Delta_{\Sd}+\varepsilon_{\ell;d} \right) \Y\left(x\right) =0, \text{ for } x \in \Sd. 
\end{equation*}
As proved for example in \cite{averywen}, for any $\ell \geq 0$, the size of $\left\{\Y:\mcoord \in \Mset\right\}$, namely, the multiplicity of the set of spherical harmonics with eigenvalue $\varepsilon_{\ell;d}$, is given by
\begin{equation}\label{eqn:sizespher}
\Xi_d\left(\ell\right) = \frac{\left(2\ell+d-1\right)\left(\ell + d-2\right)!}{\ell!\left(d-1\right)!}.
\end{equation}
The set $\bbra{\Y\left(x\right):\ell \geq 0; \mcoord\in \Mset}$ provides therefore an orthonormal basis for $\Ltwo$. For any $g \in \Ltwo$, the following Fourier - or harmonic - expansion holds
\begin{equation*}
	g\bra{x}=\sumlm \alm \Y\bra{x}, \text{ for }x \in \Sd,
\end{equation*} 
where $\left\{\alm:\ell \geq 0; \mcoord \in \Mset\right\}$ are the so-called harmonic coefficients, given by the integral
\begin{equation*}
	\alm = \langle g , \Y \rangle_{\Ltwo} = \Sint g\bra{x}\Yconj\bra{x} \diff x.
\end{equation*}
\revise{From now} on, for the sake of notational simplicity, we fix $m_{0}=\ell$. Furthermore, we will use indifferently the two equivalent short and long notations $\Y\left(x\right)$ and $Y_{\ll,m_{1},\ldots,m_{d-1}}\sphcoord$. Following \cite{averywen}, the hyperspherical harmonics are defined by
\begin{equation}\label{eqn:spharm}
	\Y\left(x\right)= \frac{1}{\sqrt{2\pi}} \prod_{j=1}^{d-1} \left( \Anorm 
	\gegenbauer{m_{j-1}}{m_{j}}{j} \left(\cos \thetacord{j}\right)\left(\sin \thetacord{j}\right)^{m_{j}} \right)e^{im_{d-1}\varphi},
\end{equation}
where $h_{m_{k-1},m_{k};k}$ is a normalizing constant, given by
\begin{equation}\label{eq:normalizing}
	\Anorm  = \left(\frac{2^{2m_{j}+d-j-2}\left(m_{j-1}-m_{j}\right)!\left(2m_{j-1} +d-j\right)\Gamma^2\left(m_{j}+\frac{d-j}{2}\right) }{\pi \left(m_{j-1}+m_{j}+d-j-1\right)!}\right)^{\frac{1}{2}}.
\end{equation} 
The function $\gegenbauergen{n}{\alpha}:\left[-1,1\right]\rightarrow \reals$, $\alpha \in\left[\left.-1/2,\infty\right)\right.$, is the Gegenbauer (or ultraspherical) polynomial of degree 
$n$ and parameter $\alpha$. Following for example \cite{handbook,szego}, they are orthogonal with respect to the measure 
\begin{equation*}
	\nu_{\alpha}\left(t\right) = \left(1-t^2\right)^{\alpha-\frac{1}{2}} \indicator{\left[-1,1\right]}{t},
\end{equation*}
that is,
\begin{equation}\label{eq:gegenortho}
	\int_{-1}^{1} \gegenbauergen{n}{\alpha} \left(t\right) \gegenbauergen{n^\prime}{\alpha} \left(t\right) \nu_{\alpha}\left(t\right) \diff t = \frac{\pi 2^{1-2\alpha} \Gamma\left(n+2\alpha\right)}{n!\left(n+\alpha\right)\Gamma^2\left(\alpha\right)} \delta_{n}^{n^\prime}, 
\end{equation}
see, for example, \cite[Formula 4.7.15]{szego}. \revise{Note that for $\alpha = 1/2$ the Gegenbauer polynomials reduce to the Legendre polynomials, while for $\alpha = 0,1$ the Gegenbauer polynomials reduce to the Chebyshev polynomials of the first and of the second kind respectively (see \cite[Chapter 22, Section 5]{handbook}).}
\\Roughly speaking, each hyperspherical harmonic in \eqref{eqn:spharm} can be viewed as product of a complex exponential function and a set of Gegenbauer polynomials, whose orders and parameters are properly nested and normalized to guarantee orthonormality, that is,
\begin{equation*}
	\int_{\Sd} \Y\left(x\right)  \Yconjprime\left(x\right) \diff x = \delta_{\ell}^{\ell^\prime}\prod_{k=1}^{d-1}\delta_{m_{k}}^{m_{k}^\prime}.
\end{equation*} 
 Hyperspherical harmonics feature also the following property, known as addition formula (see, for example, \cite{averywen}):
\begin{align}\notag
	\sum_{\mcoord \in \Mset}  \Y\bra{x} \Yconjprime\bra{x^\prime} & = \frac{\left(2\ell+d-1\right)\Gamma\left(\frac{d+1}{2}\right)\left(\ell+d-2\right)!}{2 \pi^{\frac{d+1}{2}} \left(d-1\right)! \ell! } C_{\ell}^{\left(\frac{d-1}{2}\right)}\left(\langle x,x^\prime\rangle\right)\\ & =:K_\ell \bra{x,x^\prime},\label{eqn:addition}
\end{align}
where $\langle \cdot,\cdot\rangle$ is the standard inner product in $L^2\bra{\reals^{d+1}}$. 
Note that $K_\ell$ can be viewed as the kernel of the projector over the harmonic space $\mathcal{H}_\ell$, the restriction to the sphere of the space of homogeneous and harmonic polynomials of order $\ell$. The projection $\mathcal{P}_\ell$ of $g \in \Ltwo$ onto $\mathcal{H}_\ell$ is given by
\begin{equation*}
	\mathcal{P}_\ell \sbra{g} \bra{x} = \Sint g\bra{y} K_{\ell} \bra{x,y} \diff y, \quad x\in \Sd.
\end{equation*}
It follows that
\begin{equation*}
	\mathcal{P}_\ell \sbra{g} \bra{x}=\sum_{\mcoord \in \Mset} \alm\Y\left(x\right), \quad \text{ for } x\in\Sd,
\end{equation*}
and that any function $g \in \Ltwo$ can be rewritten as the sum of projections over the spaces $\mathcal{H}_\ell$,
\begin{equation*}
	g\left(x\right) = \sum_{\ell\geq 0} \mathcal{P}_\ell \sbra{g} \bra{x} , \text{ for }x \in \Sd. 
\end{equation*}
\subsection{Spherical random fields} \label{sub:random}
Given a probability space $\left\{\Omega, \mathcal{F},\Prob \right\}$, a spherical random field $T_{\omega}\left(x\right)$, $\omega \in \Omega$ and $x \in \Sd$, describes a stochastic process defined the sphere $\Sd$. \revise{From now} on, the dependence on $\omega \in \Omega$ will be omitted and the random field will be denoted by $T\left(x\right)$, $x \in \Sd$, for the sake of the simplicity (see also \cite{randomfields}).\\
If $T$ has a finite second moment, that is, $\Ex\sbra{\abs{T\left(x\right)}^2} <\infty$ for all $x \in \Sd$, a spherical random field  can be decomposed in terms of the projections over the space $\mathcal{H}_{\ell}$, $\ell \geq 0$, so that 
\begin{equation}\label{eqn:Tproj}
T\left(x\right)  = \sum_{\ell \geq 0} T_{\ell}\left(x\right), \quad x \in \Sd,
\end{equation}
where $T_{\ell}\left(x\right) = \mathcal{P}_{\ell}\sbra{T}\left(x\right)$. 
Each projector onto $\mathcal{H}_\ell$ can be described as a linear combination of finitely many hyperspherical harmonics, 
\begin{equation}\label{eqn:Fourier_expansion}
T_\ell\left(x\right)   = \sum_{\mcoord \in \Mset} \alm \Y \left(x\right), \quad x \in \Sd.
\end{equation}
As in the deterministic case described in Section \ref{sub:harm},  for any $\ell \geq 0$ and $\mcoord \in \Mset$, the random harmonic coefficient is defined by
\begin{equation}\label{eqn:fieldcoeff}
\alm = \Sint T\left(x\right) \Yconj \left(x\right) \diff x.
\end{equation}
The random harmonic coefficients contain all the stochastic information of the random field $T$, namely,  $\alm=\alm \left(\omega\right)$, for $\omega \in \Omega$, $\ell \geq 0$ and $\mcoord \in \Mset$.\\
A random field is said to be band-limited if there exists a bandwidth $L_0\in \mathbb{N}$, so that $\alm = 0$ for any $\ell > L_0$, whenever $m \in \Mset$. In this case, it holds that
	\begin{equation}\label{eq:randomfield}
	T\left(x\right)= \sum_{\ell=0}^{L_0}\sum_{\mcoord\in\Mset} \alm \Y\left(x\right), \quad x \in \Sd.
	\end{equation}
By the practical point of view, band-limited random fields provide a useful  approximation of fields with harmonic coefficients decaying fast enough as the frequency $\ell$ grows.\\
Let us define the expectation $\mu\left(x\right)=\Ex\sbra{T\left(x\right)}$; the covariance function $\Upsilon: \Sd\times\Sd \rightarrow \reals$ of the random field $T$ is given by
\begin{equation}\label{eq:gamma}
\revise{\Upsilon\bra{x,x^\prime}} =\Ex\sbra{\bra{T\left(x\right)-\mu\left(x\right)}\bra{\bar{T}\left(x^\prime\right)-\bar{\mu}\left(x^\prime\right)}},
\end{equation} 
where, for $z \in \mathbb{C}$, $\bar{z}$ denotes its complex conjugate.
\noindent Without losing any generality, assume that $T$ is centered, so that, for $x,x^\prime \in \Sd$, it holds that
\begin{align*}
& \mu\left(x\right)=0 \\
& \revise{\Upsilon\bra{x,x^\prime}} =\Ex\sbra{T\left(x\right)\bar{T}\left(x^\prime\right)}. 
\end{align*}
Let $\gamma:\Sd\times\Sd \rightarrow \left[0,\pi\right],\gamma\left(x,x^\prime\right)=\arccos \langle x,x^\prime\rangle_{\reals^{d+1}} $ 
be the geodesic distance between $x,x^\prime \in \Sd$. A spherical random field is said to be isotropic if it is invariant in distribution with respect to rotations of the coordinate system or, more precisely, 
\begin{equation*}
	T\left(x\right) \deq T\left(Rx\right), \text{ for } x \in \Sd, R \in SO\left(d+1\right),
\end{equation*}
where $\deq$ denotes equality in distribution, and $SO\left(d+1\right)$ is the so-called special group of rotations in $\reals^{d+1}$. Following \cite{bkmpBer,bm,MaPeCUP}, if the random field is isotropic, then $\Upsilon$ depends only on $\gamma$ and its variance $\sigma^2\left(x\right)=\Upsilon\bra{x,x}$ does not depend on the location $x \in \Sd$, so that it holds that
\begin{equation*}
\sigma^2 \left(x\right)=\Ex\sbra{\abs{T\left(x\right)}^2} = \sigma^2, \quad \text{ for all }x \in \Sd,
\end{equation*}
where $\sigma^2 \in \reals^+$.
The covariance function itself can be therefore rewritten in terms of its dependence on the distance between $x$ and $x^\prime$, so that 
\begin{equation*}
\revise{\Upsilon\bra{x,x^\prime}=\Upsilon \bra{\gamma\left(x,x^\prime\right)}}.
\end{equation*}
Let us finally define the correlation function $\rho:\left[-1,1\right]\rightarrow\left[-1,1\right]$, which is invariant with respect to rotations when the random field is isotropic, that is
\begin{equation}\label{eq:isotropy}
\revise{\rho\bra{\cos \gamma\left(x,x^\prime\right)} = \frac{\Upsilon\bra{x,x^\prime}}{\sqrt{\Upsilon\bra{x,x}\Upsilon\bra{x^\prime,x^\prime}}}=\frac{\Upsilon \bra{\gamma\left(x,x^\prime\right)}}{\sigma^2}, \quad x,x^\prime = \Sd}
\end{equation}
As far as the random harmonic coefficients $\{\alm: \ell \geq 0, \mcoord \in \Mset\}$ are concerned, since $\mu\left(x\right)=0$ for $x \in \Sd$, we have that $\Ex\sbra{\alm}= 0$. \revise{On the one hand, the Fourier expansion of $T$ can be read as a decomposition of the field into a sequence of uncorrelated random variables, preserving its spectral characteristics, that is, \begin{equation}\label{eq:covariance}
\Cov\bra{\alm,a_{\ell^\prime,\mcoord^\prime}}=\Ex\sbra{\alm \bar{a}_{\ell^\prime,\mcoord^\prime}} = C_\ell \delta_{\ell}^{\ell^\prime}\prod_{k=1}^{d-1}\delta_{m_{k}}^{m_{k}^\prime},
\end{equation}
where $\{C_\ell:\ll\geq 0\}$ is the so-called angular power spectrum of $T$.\\ 
On the other hand, the spectral decomposition of the covariance function is given by
\begin{equation*}
	\Upsilon\left(x,x'\right) = \sum_{\ell\geq 0} C_{\ell} K_\ell \left(x,x^\prime\right),
	\end{equation*}
where we rewrite the covariance function in terms of the projection kernel corresponding to the frequency level $\ell$.
Combining \eqref{eqn:addition}, \eqref{eq:gamma} and \eqref{eq:covariance}, the angular power spectrum of a random field can be viewed as the harmonic decomposition of its covariance function and can be rewritten as the average
\begin{equation}\label{eq;clsum}
C_\ell=\frac{1}{\Xi_d\left(\ell\right)} \sum_{\mcoord \in \Mset} \Var{\alm},
\end{equation}
where $\Xi_d\left(\ell\right)$ is given by \eqref{eqn:sizespher}
(see, for example, \cite{m} for $d=2$).}
\section{The Gauss-Gegenbauer quadrature formula and the spherical uniform design}\label{sec:sampling} This section includes a quick overview on the Gegenbauer-Gauss formula. We also introduce the spherical uniform sampling and two related auxiliary results. Further details concerning the Gegenbauer-Gauss quadrature rule can be found in \cite{handbook,atkinson,stoer,szego}, while the spherical uniform sampling is presented by \cite{dettedsphere}.\\

\subsection{Separability of the sampling}\label{sub:sep1} We first introduce a very mild condition on the sampling procedure. Generalizing the proposal introduced by \cite{LiNorth} on $\Stwo$ to $\Sd$, $d>2$, here we consider a discretization scheme produced by  the combination of $d$ one-dimensional quadrature rules, with respect to the coordinates $\thetacord{j}$, $j=1,\ldots, d-1$, and  $\varphi$. \\ 
More specifically, we introduce the following condition on the sampling points and weights. 
\begin{condition}[Separability of the sampling scheme]\label{cond:separability}
Fix $Q_0,Q_1,\ldots,Q_{d-1} \in  \mathbb{N}$, so that $N =\prod_{j=0}^{d-1}Q_j$. For any $j=1,\ldots,d$, there exists a finite sequence of positive real-valued weights 
\begin{equation}\label{eq:weight}
\left\{\wpeso{k_{j-1}}{j}:k_{j-1}=0,\ldots,Q_{j-1}-1\right\},
\end{equation}
so that 
\begin{equation*}
\sum_{k_{j-1}=0}^{Q_{j-1}-1}\wpeso{k_{j-1}}{j}=1.
\end{equation*}
The sampling points $\left\{x_i:i=1,\ldots,N\right\}$ are component-wise given by
\begin{equation}\label{eq:samplingangles}
\left\{\left(\thetacord{1}_{k_0}, \ldots,\thetacord{d-1}_{k_{d-2}},\varphi_{k_{d-1}} \right): k_{j-1}=0,\ldots,Q_{j-1}-1 \quad \text{for } j=1, \ldots,d\right\}.
\end{equation}
\end{condition}
Roughly speaking, each sequence in \eqref{eq:weight} corresponds to the set of weights for a quadrature formula with respect to the $j$-th angular component of the angle vector $x=\left(\thetacord{1},\ldots,\thetacord{d-1}, \varphi\right)$. The subscript index is related to the harmonic numbers $\ell=m_0, m_1,\ldots,m_{d-1}$. \\
Each value of the index $i^\ast\in \left\{1,\ldots,N\right\}$ corresponds uniquely to a suitable choice of values $\left\{k_0^\ast,\ldots,k_{d-1}^\ast\right\}$, while the related weight $w_{i^\ast}$ is given by  
\begin{equation*}
w_{i^\ast} = \prod_{j=1}^{d} \wpeso{k_{j-1}^\ast}{j}.
\end{equation*}
\subsection{The Gauss-Gegenbauer quadrature formula}\label{sub:gegengauss}
In general, a quadrature rule denotes an approximation of a definite integral of a function by means of a weighted sum of function values, estimated at specified points within the domain of integration (see, for example, \cite{stoer}). In particular, a $r$-point Gaussian quadrature rule is a
formula specifically built to yield an exact result for polynomials of degree smaller or equal to $2r - 1$, after a suitable choice of 
the points and weights $\left\{t_k,\omega_k: k=0,\ldots, r-1\right\}$. For this reason, it is also called quadrature formula of degree $2r-1$. The domain of integration is conventionally taken as $\left[-1,
1\right]$, and the choice of points and weights usually depends on the so-called weight function $a$, whereas the integral can be written in the form $\int_{-1}^{1} p\left(t\right)a\left(t\right)\diff t$. Here $p\left(t\right)$ is approximately polynomial, and $a\left(t\right)\in L^1\left(\left[-1,1\right]\right)$ is a well-known function. In this case, a proper selection of $\left\{t_k,\omega_k: k=0,\ldots, r-1\right\}$ yields
\begin{equation*}
\int_{-1}^{1} p\left(t\right)a\left(t\right)\diff t = \sum_{k=0}^{r-1} \omega_k p\left(t_k\right).
\end{equation*}
\revise{From now on, while the letter $\omega$ will concern weights related to quadrature formulas for coordinates on the interval $\left[-1,1\right]$, the letter $w$ will denote weights related to quadrature formulas for angular coordinates.}\\
Following for example \cite{stoer}, it can be shown that the quadrature points can be chosen as the roots of some polynomial belonging to some suitable class of orthogonal polynomials, depending on the function $a$. \\When $a\left(t\right)=1$ for all $t \in \left[-1,1\right]$, the associated polynomials are the Legendre polynomials. In this case, the method is then known as Gauss-Legendre quadrature (see \cite[Formula 25.4.29]{handbook}). Such a method is widely used in the $2$-dimensional spherical framework (see, for example, \cite{atkinson}), and the aliases produced by this formula were largely investigated in \cite{LiNorth}). \\
 More in general, as stated in \cite[Formula 25.4.33]{handbook}, when $a\left(t\right)=a_{\alpha,\beta}\left(t\right)=\left(1-t\right)^\alpha\left(1+t\right)^\beta$, the method is known as the Gauss-Jacobi quadrature formula, since it makes use of the Jacobi polynomials 
(see also \cite[p.47]{szego}). Since it is well-known that Jacobi polynomials reduce to Gegenbauer polynomials when $\alpha=\beta$ (see, for example, \cite[Formula 4.1.5]{szego}), we refer to the quadrature rule denoted by a weight function $\nu_{\alpha}\left(t\right)$ (equal to $a_{\alpha,\beta}\left(t\right)$ for $\alpha=\beta$) as the Gauss-Gegenbauer quadrature (see, for example, \cite{gegecub}).\\
Subsequently, the discrete uniform sampling over the sphere is obtained by combining a trapezoidal rule for the angle $\varphi$ and $\left(d-1\right)$ Gauss-Gegenbauer quadrature rules for the coordinates $\thetacord{j}$, for $j=1,\ldots,d-1$, with weight function $a_j\left(t\right)= \nu_{\alpha\left(j\right)}\left(t\right)$, $\alpha\left(j\right)=d-1-j$. \\
This method has been described in details by \cite[Lemma 3.1]{dettedsphere} in the framework of optimal design for regression problems with spherical predictors. Indeed, by the theoretical point of view, the (continuous) uniform distribution on the sphere provides an optimal design for experiments on the unit sphere, but this distribution is not implementable as a design in real experiments (for more details, see \cite[Theorem 3.1]{dettedsphere}). Thus, a set of equivalent discrete designs is established by means of the combination of the following quadrature formulas over the sphere, written as in \cite[Lemma 3.1]{dettedsphere}, to which we refer to for a proof.
\revise{\begin{lemma}[Gauss-Gegenbauer quadrature]\label{def:quadrature}
	Let $a\in L^1\left(\left[-1,1\right]\right)$ be a positive weight function so that $\bar{a}=\int_{-1}^{1} a\left(t\right)\diff t$. Consider also the set of $r\in \mathbb{N}$ points $-1\leq t_0 < \ldots < t_{r-1} \leq 1$ , associated to the positive weights $\omega_0,\ldots,\omega_{r-1}$ such that $\sum_{k=0}^{r-1}\omega_k =1$. Then the set of points and weights$\left\{t_k,\omega_k:k=0,\ldots,r-1\right\}$ generates a quadrature formula of degree $z \geq r$, namely,
	\begin{equation}\label{eq:quadrat}
	\int_{-1}^{1} a\left(t\right) t^p \diff t = \bar{a} \sum_{k=0}^{r-1} \omega_k t_{k}^{p}, \quad \text{for } p=0, \ldots, z, 
	\end{equation} 
	if and only if the following conditions are satisfied:
	\begin{enumerate}
		\item The polynomial $
		\prod_{k=0}^{r-1}  \left(t-t_k\right)$ is orthogonal to all polynomials of degree smaller or equal to $z-r$ with respect to $a\left(t\right)$,
		\begin{equation*}
		\int_{-1}^{1} \prod_{k=0}^{r-1}  \left(t-t_k\right) a\left(t\right)t^p \diff t = 0, \quad \text{for } p=0,\ldots, z-r;
		\end{equation*}
		\item the weights $\omega_k$ are given by 
		\begin{equation}\label{eq:gegweight}
		\omega_k = \frac{1}{\bar{a}}\int_{-1}^{1} a\left(t\right) \lambda_k\left(t\right) \diff t, \quad\text{for } k=0,\ldots,r-1, 
		\end{equation}
		where $\lambda_k\left(t\right)$ is the $k$-th Lagrange interpolation formula with nodes $t_0,\ldots,t_{r-1}$, given by
		\begin{equation*}
		\lambda_k\left(t\right) = \prod _{i=0, i\neq k}^{r-1} \frac{t-t_i}{t_i-t_k}.
		\end{equation*} 
	\end{enumerate}
\end{lemma}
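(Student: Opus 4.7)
The plan is to prove the two directions of this classical Gauss-type quadrature characterization by exploiting polynomial division by the node polynomial $\pi(t) := \prod_{k=0}^{r-1}(t-t_k)$ and the interpolation properties of the Lagrange basis $\{\lambda_k\}$. I would state up front the observation that $\lambda_k(t_i) = \delta_{ki}$ and that $\{\lambda_0,\ldots,\lambda_{r-1}\}$ forms a basis of the space $\mathcal{P}_{r-1}$ of polynomials of degree at most $r-1$.

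For sufficiency, I would start from conditions (1) and (2) and show \eqref{eq:quadrat}. Given any monomial $t^p$ with $0\leq p\leq z$, perform Euclidean division by $\pi$, writing
\[
t^p = \pi(t)\, q(t) + \rho(t),\qquad \deg q \leq p-r \leq z-r,\quad \deg \rho \leq r-1.
\]
Multiplying by $a(t)$ and integrating over $[-1,1]$, the $\pi q$ term vanishes by condition (1). Since $\deg \rho \leq r-1$, Lagrange interpolation at the nodes gives $\rho(t) = \sum_{k=0}^{r-1}\rho(t_k)\lambda_k(t)$, and at the nodes $t^p_k = \pi(t_k)q(t_k)+\rho(t_k)=\rho(t_k)$. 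Substituting the weight expression from (2) then gives
\[
\int_{-1}^{1} a(t)\, t^p\, \diff t \;=\; \sum_{k=0}^{r-1}\rho(t_k) \int_{-1}^{1} a(t)\lambda_k(t)\,\diff t \;=\; \bar a \sum_{k=0}^{r-1}\omega_k t^p_k,
\]
which extends to all polynomials of degree at most $z$ by linearity.

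For necessity, assume the quadrature formula is exact on $\mathcal{P}_z$ with $z\geq r$. Taking $p(t)=\lambda_k(t)\in \mathcal{P}_{r-1}\subset \mathcal{P}_z$ and using $\lambda_k(t_i)=\delta_{ki}$ gives
\[
\int_{-1}^{1} a(t)\lambda_k(t)\,\diff t \;=\; \bar a \sum_{i=0}^{r-1}\omega_i \lambda_k(t_i) \;=\; \bar a\, \omega_k,
\]
which is exactly \eqref{eq:gegweight}. For condition (1), let $q$ be an arbitrary polynomial of degree at most $z-r$; then $\pi\cdot q \in \mathcal{P}_z$, so exactness yields
\[
\int_{-1}^{1} \pi(t)q(t)a(t)\,\diff t \;=\; \bar a \sum_{k=0}^{r-1}\omega_k \pi(t_k) q(t_k) \;=\; 0,
\]
since $\pi(t_k)=0$ by construction. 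This is orthogonality of $\pi$ to $\mathcal{P}_{z-r}$.

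The proof is essentially bookkeeping: nothing here is subtle analytically. The only step one must be cautious about is the degree count in the Euclidean division (one needs $p\leq z$ to guarantee $\deg q \leq z-r$, so the hypothesis $z\geq r$ is used implicitly to ensure $\mathcal{P}_{z-r}$ is nonempty and that the remainder machinery is consistent). I would remark that positivity of the weights $\omega_k$ need not be rederived — it is built into the hypothesis of the lemma — and that the equivalence does \emph{not} by itself produce the maximal degree $z=2r-1$ of the Gauss-Gegenbauer rule; that sharper statement would require additionally choosing the $t_k$ as the zeros of the $r$-th orthogonal polynomial with respect to $a$, which is not part of this lemma's claim.
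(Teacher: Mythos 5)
Your proof is correct and complete. Note, however, that the paper itself does not prove this lemma: it is quoted verbatim from the cited reference (Lemma 3.1 of \cite{dettedsphere}), ``to which we refer to for a proof,'' so there is no in-paper argument to compare against. What you give is the standard textbook characterization of exactness of degree $z$: Euclidean division $t^p=\pi(t)q(t)+\rho(t)$ by the node polynomial plus condition (1) to kill the $\pi q$ term, Lagrange interpolation of the remainder plus condition (2) for sufficiency, and testing against $\lambda_k$ and $\pi\cdot q$ for necessity. Both directions are handled cleanly, the degree bookkeeping ($\deg q\leq p-r\leq z-r$, $\deg\rho\leq r-1$, $\deg(\pi q)\leq z$) is right, and your closing remarks correctly delimit what the lemma does and does not assert (in particular that the maximal degree $z=2r-1$ requires choosing the $t_k$ as zeros of the orthogonal polynomial, which is invoked only later in Section 3.3 of the paper). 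One small point worth flagging: the lemma as printed writes $\lambda_k(t)=\prod_{i\neq k}\frac{t-t_i}{t_i-t_k}$, whose value at $t_k$ is $(-1)^{r-1}$ rather than $1$; your identity $\lambda_k(t_i)=\delta_{ki}$ presupposes the standard normalization with denominator $t_k-t_i$, which is evidently what is intended, but you should state that you are using that convention (or note the sign typo) so the necessity step for \eqref{eq:gegweight} is literally consistent with the displayed formula.
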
}
\subsection{The spherical uniform sampling}\label{sub:unifo} Assume now  $z=2Q_0$ in Definition \ref{def:quadrature}. Following \cite[Formula 4.7.15]{szego} (see also \eqref{eq:gegenortho}), the Gegenbauer polynomials $\gegenbauergen{n}{\alpha}$ are orthogonal with respect to $a\left(t\right)=\nu_{\alpha}\left(t\right)$. Fixed $n$, the real-valued $n$ roots of $\gegenbauergen{n}{\alpha}$ have multiplicity $1$ and are located in the interval $\left[-1,1\right]$. Thus, it follows that for any $r \in \left\{Q_0+1, \ldots, 2Q_0 \right\}$, there exists at least one set of points and weights $\left\{t^{\left(j\right)}_k, \omega^{\left(j\right)}_k:k=0,\ldots,r-1\right\}$, $j=1\ldots,d-1$, generating a quadrature formula \eqref{eq:quadrat} with $a\left(t\right)=a_{j}\left(t\right)= \nu_{\alpha\left(j\right)}\left(t\right)$, and $\alpha\left(j\right)=d-1-j$. \\
\revise{In Lemma \ref{def:quadrature} above, we have recalled a set of quadrature formulas for the interval $\left[-1,1\right]$, each of those associated to the corresponding weight function $\nu_{\alpha\left(j\right)}$, for $j=1,\ldots,d-1$, The following Condition exploits properly these quadrature formulas for $\thetacoord$, combined with a trapezoidal rule for $\varphi$, to establish a well-defined uniform distribution over the sphere of arbitrary dimension $d$ (see also, for example, \cite{atkinson,dettedsphere}). Observe that this choice yields a suitable quadrature formula for each angular component in $\Sd$.}
\begin{condition}[Spherical uniform sampling]\label{cond:uni}
	Assume that Condition \ref{cond:separability} holds and fix $M\in \mathbb{N}$ so that $Q_{d-1}=2M$. The sampling with respect to $\varphi$ is uniform, so that 
	for any $k_{d-1}=0,\ldots, 2M-1$, it holds that
	\begin{align}
	\label{eq:azimuthpoint}\varphi_{k_{d-1}}=& \frac{k_{d-1}\pi}{M};\\
	\label{eq:azimuthweight}	\wpeso{k_{d-1}}{d}= & \frac{\pi}{M}.
	\end{align} 
	The sampling with respect to each component $\thetacord{j}$, $j=1,\ldots,d-1$ has the form 
	\begin{align}
	\label{eq:thetasamp}	\thetacord{j}_{k_{j-1}}=&\arccos\left(t_{k_{j-1}}^{\left(j\right)}\right);\\
	\label{eq:thetawei} \wpeso{k_{j-1}}{j}=&\frac{\omega_{k_{j-1}}^{\left(j\right)}}{\left(\sin \thetacord{j}_{k_{j-1}} \right) ^{d-j}},
	\end{align}
	where, for any $j=1,\ldots,d-1$, $\left\{t_{k_{j-1}}:k_{j-1}=0,\ldots,Q_{j-1}-1 \right\}$ in \eqref{eq:thetasamp} are the zeros of $\gegenbauergen{Q_{j-1}}{\frac{d-j}{2}}$, while  $\left\{\omega_{k_{j-1}}:k_{j-1}=0,\ldots,Q_{j-1}-1 \right\}$ in \eqref{eq:thetawei} are the corresponding weights in the Gauss-Gegenbauer framework, given by \eqref{eq:gegweight} in Definition \eqref{def:quadrature}.
\end{condition} 
\revise{As already discussed in \cite{atkinson,dettedsphere}, the Gauss-Gegenbauer quadrature in Lemma \ref{def:quadrature} is characterized by a unitary sum of the weights for each component, while Condition \ref{cond:uni} guarantees orthonormality for spherical harmonics $Y_{\ell,\mcoord}$ and $Y_{\ell^\prime,\mcoord^\prime}$ so that $\ell+\ell^\prime \leq 2 Q_0$, that is,
\begin{align*}
&\sum_{k_0=0}^{Q_0-1}\ldots \sum_{k_{d-1}=0}^{Q_{d-1}-1} \left(\prod_{j=1}^{d}w_{k_{j-1}^{\left(j\right)}}\right)  Y_{\ell,\mcoord}\left(\thetacoord_{k_0,\ldots,k_{d-2}},\varphi_{k_{d-1}}\right)\cc{Y}_{\ell^\prime,\mcoord^\prime}\left(\thetacoord_{k_0,\ldots,k_{d-2}},\varphi_{k_{d-1}}\right)\\ &\qqqquad\qqqquad\qqqquad \qqqquad\qqqquad\qqqquad= \delta_{\ell}^{\ell^\prime}\prod_{k=1}^{d-1}\delta_{m_k}^{m_k^\prime}, \end{align*}
for $\ell+\ell^\prime \leq 2 Q_0$.}\\
We present now two auxiliary results crucial to prove Theorem \ref{thm:main}, referring to the aliasing effects under Condition \ref{cond:uni}. Their proofs can be found in Section \ref{sub:proofaux} \\
The first Lemma establishes the parity properties of the cubature points and weights for each angular component $\thetacord{j}$ with respect to $\thetacord{j}=\pi/2$, for $j=1,\ldots,d-1$. Indeed, due to the parity formula $\gegenbauergen{r}{\alpha} \left(-t\right)= \left(-1\right)^r \gegenbauergen{r}{\alpha} \left(t\right)$ (see \cite[Formula 4.7.4]{szego}), the roots of $\gegenbauergen{r}{\alpha}\left(t\right)$, $t_1, \ldots, t_r$, are symmetric with respect to $0$, namely, $t_k = -t_{r-k-1}$ for $k=0,\ldots,\left[r/2\right]$. As a consequence, the following lemma holds.
\begin{lemma}\label{lemma:sampling}
	Let the cubature points and weights be given by \eqref{eq:thetasamp} and \eqref{eq:thetawei} respectively in the framework described by Definition \ref{def:quadrature}. Hence, for any $j=1,\ldots,d-1$,  it holds that 
	\begin{align*}
	&	\vartheta_{k_{j-1}}^{\left(j\right)} = \pi - 	\vartheta_{Q_{j-1}- k_{j-1}-1}^{\left(j\right)};\\
	&	\wpeso{k_{j-1}}{j} = \wpeso{Q_{j-1}-k_{j-1}-1}{j}.
	\end{align*}
\end{lemma}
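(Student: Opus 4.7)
The plan is to treat the two claims separately, using the parity symmetry of the Gegenbauer polynomials as the single underlying fact, together with the explicit formulas for sampling points and weights in Condition \ref{cond:uni}.

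For the first identity, I would start from $\vartheta_{k_{j-1}}^{(j)} = \arccos(t_{k_{j-1}}^{(j)})$ where the $t_{k_{j-1}}^{(j)}$ are the roots of $\gegenbauergen{Q_{j-1}}{(d-j)/2}$. Since $\gegenbauergen{r}{\alpha}(-t) = (-1)^r \gegenbauergen{r}{\alpha}(t)$, the roots come in opposite pairs, which forces the ordered sequence to satisfy $t_{k_{j-1}}^{(j)} = -t_{Q_{j-1}-k_{j-1}-1}^{(j)}$. Invoking $\arccos(-s) = \pi - \arccos(s)$ then yields the first claim directly.

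For the second identity, the denominator $(\sin \vartheta_{k_{j-1}}^{(j)})^{d-j}$ appearing in \eqref{eq:thetawei} is invariant under $\vartheta \mapsto \pi - \vartheta$, so in view of the first identity it suffices to prove the analogous symmetry $\omega_{k_{j-1}}^{(j)} = \omega_{Q_{j-1}-k_{j-1}-1}^{(j)}$ for the one-dimensional Gauss--Gegenbauer weights on $[-1,1]$. I would use the explicit representation \eqref{eq:gegweight} in terms of the Lagrange basis $\lambda_k(t)$. Setting $k'=Q_{j-1}-k-1$, I would apply the substitution $t = -u$ in the integral defining $\omega_{k'}^{(j)}$; using the root-pairing $t_i = -t_{Q_{j-1}-i-1}$ inside $\lambda_{k'}$, a routine reindexing gives $\lambda_{k'}(-u) = \lambda_k(u)$, while the weight function $\nu_{\alpha(j)}(u) = (1-u^2)^{\alpha(j)-1/2}$ is even. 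This turns the transformed integral back into the one defining $\omega_k^{(j)}$, proving the required equality.

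The only delicate step is verifying carefully that the Lagrange polynomial is invariant in the way described, because one has to track the signs picked up both from the numerator factors $-u - t_i = -(u - t_{Q_{j-1}-i-1})$ and from the denominator factors $t_i - t_{k'} = -(t_{Q_{j-1}-i-1} - t_k)$; however, there are $Q_{j-1}-1$ factors of each kind, so the signs cancel globally and one obtains $\lambda_{k'}(-u)=\lambda_k(u)$ without remainder. Once this identity is in hand, the evenness of $\nu_{\alpha(j)}$ closes the argument, and combining it with the first part completes the proof.
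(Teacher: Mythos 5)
Your proposal is correct and follows essentially the same route as the paper: the angle symmetry from the parity of Gegenbauer polynomials (hence the pairing $t_{k}=-t_{Q_{j-1}-k-1}$ of ordered roots), the invariance of the $\sin$ factor under $\vartheta\mapsto\pi-\vartheta$, and the weight symmetry via the representation \eqref{eq:gegweight} together with the evenness of $\nu_{\alpha(j)}$ and the identity $\lambda_{Q_{j-1}-k-1}(-u)=\lambda_{k}(u)$. Your version is in fact slightly more explicit than the paper's, which performs the reindexing of the Lagrange factors without spelling out the substitution $t=-u$ or the sign bookkeeping.
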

The next result exploits Lemma \ref{lemma:sampling} to develop parity properties on the Gauss-Gegenbauer quadrature formula.
\begin{lemma}\label{lemma:simmetry}
	Let $\psi\in \left[0, \pi \right]$, and $j=1,\ldots,d-1$.  Let $m_i \in \mcoord$, with $m_0=\ell$ and $m_i^\prime \in \mcoord^\prime$, with $m_0^\prime = \ell^\prime$ and define, for  $j=1,\ldots,d-1$, 
	\begin{equation*}
	G_j\left(\psi\right) = \gegenbauer{m_{j-1}}{m_{j}}{j}\left(\cos\psi\right) \gegenbauer{m^\prime_{j-1}}{m^\prime_{j}}{j}\left(\cos \psi\right) \left(\sin\psi\right) ^{d-j}.
	\end{equation*}
	Then it holds that
	\begin{equation}\label{eq:parity2}
	G_j\left(\pi - \psi\right)  = \bra{-1}^{m_{j-1}+m_{j-1}^\prime-m_{j}-m_{j}^\prime }G\left(\psi\right).
	\end{equation}
	Furthermore, for $Q \in \mathbb{N}$, let $\left\{\psi_k:k=0,\ldots, Q-1 \right\}$ and $\left\{w_k:k=0,\ldots, Q-1 \right\}$ be samples of points and weights in $\left[-1,1\right]$ so that  for  $k=0,\ldots,\left[Q/2\right]$
	\begin{align*}
	&\psi_k= \psi_{Q-1-k}, \\
	& w_k= w_{Q-1-k}, 
	\end{align*}
	where $\left[\cdot\right]$, $t \in \reals$ denotes the floor function. Then, if $\bra{m_{j-1}+m_{j-1}^\prime-m_{j}-m_{j}^\prime}= 2c+1$, $c\in \mathbb{N}$, it holds that
	\begin{equation}\label{eq:parity3}
	\sum_{k=0}^{Q-1} w_k G_j\left(\psi_k\right)=0.
	\end{equation}	 
\end{lemma}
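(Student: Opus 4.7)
My strategy is to establish the parity relation \eqref{eq:parity2} directly from the parity of the Gegenbauer polynomials, and then to obtain the vanishing identity \eqref{eq:parity3} via a pairing/reindexing argument exploiting the symmetry of nodes and weights. Note that, for the symmetry hypothesis in the second part to be compatible with the angular setting $\psi\in\left[0,\pi\right]$ of the first part and with Lemma \ref{lemma:sampling}, the stated condition $\psi_k=\psi_{Q-1-k}$ has to be read as $\psi_k=\pi-\psi_{Q-1-k}$; this will be used below.

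For \eqref{eq:parity2}, I would invoke the classical parity identity $\gegenbauergen{r}{\alpha}\bra{-t}=\bra{-1}^{r}\gegenbauergen{r}{\alpha}\bra{t}$ from \cite[Formula 4.7.4]{szego}, together with $\cos\bra{\pi-\psi}=-\cos\psi$ and $\sin\bra{\pi-\psi}=\sin\psi$. Substituting $\pi-\psi$ into $G_j$, each Gegenbauer factor contributes a sign: the unprimed factor $\gegenbauer{m_{j-1}}{m_{j}}{j}$ picks up $\bra{-1}^{m_{j-1}-m_{j}}$, the primed factor picks up $\bra{-1}^{m^\prime_{j-1}-m^\prime_{j}}$, while $\bra{\sin\psi}^{d-j}$ is invariant. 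Multiplying these signs together yields exactly $\bra{-1}^{m_{j-1}+m^\prime_{j-1}-m_{j}-m^\prime_{j}}$, which is \eqref{eq:parity2}.

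For \eqref{eq:parity3}, set $S=\sum_{k=0}^{Q-1} w_k G_j\bra{\psi_k}$. Using $\psi_k=\pi-\psi_{Q-1-k}$ and applying \eqref{eq:parity2} under the hypothesis that $m_{j-1}+m^\prime_{j-1}-m_{j}-m^\prime_{j}=2c+1$, I obtain $G_j\bra{\psi_k}=G_j\bra{\pi-\psi_{Q-1-k}}=-G_j\bra{\psi_{Q-1-k}}$. Therefore $S=-\sum_{k=0}^{Q-1} w_k G_j\bra{\psi_{Q-1-k}}$; reindexing with $k^\prime=Q-1-k$ and using $w_{Q-1-k}=w_k$ gives $S=-S$, hence $S=0$.

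I expect no substantive obstacle here: once the Gegenbauer parity is invoked, both statements follow by direct substitution and a telescoping reindexing. The only point worth care is the bookkeeping of the sign exponent in \eqref{eq:parity2} and the correct interpretation of the symmetry hypothesis in the second part; the latter is naturally guaranteed when the nodes come from the Gauss--Gegenbauer rule of Condition \ref{cond:uni}, via Lemma \ref{lemma:sampling}, which is precisely the setting in which \eqref{eq:parity3} will be applied.
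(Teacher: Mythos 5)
Your proposal is correct and follows essentially the same route as the paper: Gegenbauer parity plus $\cos(\pi-\psi)=-\cos\psi$, $\sin(\pi-\psi)=\sin\psi$ for \eqref{eq:parity2}, and the node/weight symmetry for \eqref{eq:parity3}; you also rightly read the hypothesis as $\psi_k=\pi-\psi_{Q-1-k}$, which is what the paper's own proof actually uses. The only cosmetic difference is that your $S=-S$ reindexing absorbs the odd-$Q$ case automatically, whereas the paper pairs terms explicitly and treats the middle node $\psi=\pi/2$ separately.
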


\section{Aliasing effects on the sphere}\label{sec:aliasing}
This section presents our main results concerning the aliasing phenomenon for $d$-dimensional spherical random fields. 
First, we define the aliasing function, the key tool to explicitly  determine the aliases for any given harmonic coefficient. Then, we study the aliasing function \revise{and the set} of harmonic numbers identifying the aliases for any given coefficient $\alm$ in two different cases. The proof of the theorems presented in this section are collected in Section \ref{sub:main}.\\
As a first step, we just assume that the aliasing function is separable with respect to the angular components. This assumption is very mild, as it reflects both the separability of the spherical harmonics and the practical convenience of choosing separable sampling points, with respect to the angular coordinates.\\
As a second step, we study the aliasing effects under the assumption that the sample comes from a spherical uniform design.\\

\subsection{The aliasing function}\label{sub:alfunc}
\noindent In practical applications, the measurements of the random fields can be sampled  only over a finite number of locations on $\Sd$. As a straightforward consequence, the integral \eqref{eqn:fieldcoeff} can not be explicitly computed, but it has to be replaced by a sum of finitely many samples of $T$. \\
Fixed a sample size $N \in \mathbb{N}$ and given a set of sampling points over $\Sd$ $$\bbra{x_i=\left(\thetacoord_i, \varphi_i\right)\in \Sd:i=1,\ldots,N},$$ the measurements of the spherical random field $T$ are collected in the sample $\bbra{T\bra{x_i}:i=1,\ldots,N}$. For any $\ell\geq0$ and $\mcoord \in \Mset$, the approximated harmonic coefficient is given by 
\begin{equation}\label{eq:firstalias}
\alalm =  \sum_{i=1}^{N} w_{i} T\bra{\thetacoord_i,\varphi_i} \Yconj \bra{\thetacoord_i,\varphi_i} f\left(\thetacoord_i\right),
\end{equation}
where $f\left(\thetacoord\right)$ is given by \eqref{eq:ffunc}. Combining \eqref{eqn:Tproj} and \eqref{eqn:Fourier_expansion} with \eqref{eq:firstalias}  yields
\begin{align}
\notag \alalm = & \sum_{i=1}^{N} w_{i} \bra{\sum_{\ell^\prime \geq 0}\sum_{\mcoord^\prime\in \Msetprime} a_{\primen}{Y}_{\primen}\bra{\thetacoord_i, \varphi_i}} \Yconj\bra{\thetacoord_i, \varphi_i} f\left(\vartheta_i\right) \\
=&\sum_{\ell^\prime \geq 0}\sum_{\mcoord^\prime\in \Msetprime}\Alias a_{\primen}.
\label{eqn:aliaseries} 
\end{align}
where $\Alias$ is given by
\begin{equation}
\Alias=  \sum_{i=1}^{N} w_{i}  {Y}_{\primen}\bra{\thetacoord_i, \varphi_i} \Yconj \bra{\thetacoord_i, \varphi_i} f\left(\thetacoord_i \right) 
.\label{eqn:aliasfunct}
\end{equation}
\revise{From now} on, we will refer to $\Alias$ as the \textit{aliasing function} and to $\alalm$ as the \revise{\textit{random aliased coefficient}}. 
For $\ell^\prime \neq \ell$ and $m^\prime \neq m$, the coefficients $a_{\primen}$ in \eqref{eqn:aliaseries} are called \textit{aliases} of $\alm$ if $\Alias \neq 0$. \revise{Note that if the random field $T$ is centered, it follows that 
\begin{align*}
\Ex\left[\alalm\right] =&\sum_{\ell^\prime \geq 0}\sum_{\mcoord^\prime\in \Msetprime}\Alias \Ex\left[a_{\primen}\right] = 0.
\end{align*} }
As stated by \cite{LiNorth} for the case $d=2$, on the one hand, the following equality
\begin{equation*}
\Alias=\delta^{\ell}_{\ell^\prime}\prod_{i=1}^{d-1}\delta^{m_{i}}_{m_{i}^\prime},
\end{equation*}
is a necessary and sufficient condition to identify $\alm$ and $\alalm$. This equality does not hold in general (see Section \ref{sec:bandlimited}). On the other hand, fixed  $\ell, \ell^\prime, \mcoord$ and $\mcoord^\prime$, 
 if $\Alias \neq 0$, that is, $a_{\primen}$ is an alias of $\alm$, its intensity, denoting how large is the contribution of this alias, is given by $\abs{\Alias}$. \\
 The total amount of aliases in \eqref{eqn:aliaseries} and the corresponding intensity depends specifically on the choice of the sampling points $\bbra{x_i: i=1,\ldots, N}$ over $\Sd$, which characterizes entirely the subsequent structure of \eqref{eqn:aliasfunct}. In other words, every setting chosen for the sampling points leads to a specific set of aliases, described by the corresponding aliasing function.\\ 
 Here we study the aliasing function $\Alias$ first in a more general framework, under the assumption of a separable sampling with respect to the angular coordinates in Section \ref{sub:ali1}, and then for a discrete version of the spherical uniform distribution in Section \ref{sub:ali2}. 
 \subsection{The separability of the aliasing function}\label{sub:ali1} Let us assume now that the assumptions of Condition \ref{cond:separability} hold. Thus,
given $Q_0,Q_1,\ldots,Q_{d-1} \in  \mathbb{N}$, so that $N =\prod_{j=0}^{d-1}Q_j$, for $j=1,\ldots,d-1$, the corresponding set of quadrature points and weights is given by
\begin{equation*}
\left\{\left(\vartheta_{{k}_{j-1}}^{\left(j\right)},\wpeso{k_{j-1}}{j}\right)\in\left[0,\pi\right]\times[0,1]:k_{j-1}=0,\ldots,Q_{j-1}-1\right\},
\end{equation*}
while, for $j=d$, we have that
\begin{equation*}
\left\{\left(\varphi_{{k}_{d-1}},\wpeso{k_{d-1}}{d}\right)\in\left[0,2\pi\right]\times[0,1]:k_{d-1}=0,\ldots,Q_{d-1}-1\right\},
\end{equation*}
As a straightforward consequence, \revise{we obtain the following result.}
\begin{theorem}\label{thm:auxiliary}
	\revise{Under Condition \ref{cond:separability},} it holds that
\begin{equation}\label{eq:aliasdec}
\Alias=\frac{1}{2\pi}\prod_{j=1}^{d-1} \Anorm \Anormprime   \Ifunc \Jfunc, 
\end{equation}
where $\Anorm$ is given by \eqref{eq:normalizing} and
\begin{align}
 & \Jfunc = \sum_{k_{d-1}=0}^{Q_{d-1}-1} w_{k_{d-1}}^{\left(d\right)} e^{i\bra{m_{d-1}^{\prime}-m_{d-1}}\varphi_{k_{d-1}}};\label{eq:Jfunc} \\
  & \Ifunc\! =\!\!\! \sum_{k_{j-1}=0}^{Q_{j-1}-1} \!\!\!w_{k_{j-1}}^{\left(j\right)} \left( \sin \vartheta_{k_{j-1}}^{\left(j\right)} \right)^{m_{j}\!+m_{j}^{\prime}\!+d-j}\notag\\
  &\qqqquad\qqqquad\quad \,	\cdot\gegenbauer{m_{j-1}}{m_{j}}{j} \left(\!\cos \thetacord{j}_{k_{j-1}}\!\right) \gegenbauer{m^{\prime}_{j-1}}{m^{\prime}_{j}}{j} \left(\!\cos \thetacord{j}_{k_{j-1}}\!\right).\label{eq:Ifunc} 
\end{align}
\end{theorem}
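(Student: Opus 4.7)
The plan is to substitute the explicit product formula \eqref{eqn:spharm} for the hyperspherical harmonics into the aliasing function \eqref{eqn:aliasfunct}, then invoke the separability of the sampling scheme (Condition \ref{cond:separability}) to turn the $N$-term sum into an iterated sum over one-dimensional quadratures that factorises coordinate-by-coordinate.

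First, I would rewrite the product
\begin{equation*}
Y_{\primen}\bra{\thetacoord_i,\varphi_i}\,\Yconj\bra{\thetacoord_i,\varphi_i}\, f\bra{\thetacoord_i}
\end{equation*}
using \eqref{eqn:spharm} and the definition \eqref{eq:ffunc} of $f$. The two complex exponentials combine to produce the single factor $e^{i(m_{d-1}^\prime-m_{d-1})\varphi_i}/(2\pi)$, depending only on $\varphi_i$; for each $j=1,\dots,d-1$ the Gegenbauer factors $\gegenbauer{m_{j-1}}{m_j}{j}(\cos\thetacord{j}_i)\,\gegenbauer{m^\prime_{j-1}}{m^\prime_j}{j}(\cos\thetacord{j}_i)$, the powers $(\sin\thetacord{j}_i)^{m_j+m_j^\prime}$, and the extra $(\sin\thetacord{j}_i)^{d-j}$ coming from $f(\thetacoord_i)$ assemble into the integrand of \eqref{eq:Ifunc}; and the normalising constants $\Anorm\,\Anormprime$ factor out as prefactors.

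Next I would invoke Condition \ref{cond:separability} to reindex. Each $i\in\{1,\dots,N\}$ corresponds uniquely to a tuple $(k_0,\dots,k_{d-1})$ with $k_{j-1}\in\{0,\dots,Q_{j-1}-1\}$, the angular coordinates satisfy \eqref{eq:samplingangles}, and the weights factor as $w_i=\prod_{j=1}^{d}\wpeso{k_{j-1}}{j}$. Consequently
\begin{equation*}
\sum_{i=1}^{N}w_i\,\bigl(\cdots\bigr)=\sum_{k_0=0}^{Q_0-1}\cdots\sum_{k_{d-1}=0}^{Q_{d-1}-1}\Bigl(\prod_{j=1}^{d}\wpeso{k_{j-1}}{j}\Bigr)(\cdots),
\end{equation*}
so that, since each factor in the integrand depends on only a single angular coordinate and thus on a single summation index, the multi-sum collapses to a product of one-dimensional sums by Fubini for finite sums.

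Collecting terms, the $j=d$ one-dimensional sum over $k_{d-1}$ against $e^{i(m_{d-1}^\prime-m_{d-1})\varphi_{k_{d-1}}}$ is precisely $\Jfunc$ in \eqref{eq:Jfunc}, while for each $j=1,\dots,d-1$ the sum over $k_{j-1}$ against the product of Gegenbauer polynomials and the power of $\sin\thetacord{j}_{k_{j-1}}$ is precisely $\Ifunc$ in \eqml{eq:Ifunc}. Pulling out the $j$-independent prefactor $1/(2\pi)$ and the normalising constants $\prod_{j=1}^{d-1}\Anorm\,\Anormprime$ yields \eqref{eq:aliasdec}. No substantial obstacle is expected; the only care required is bookkeeping of indices, notably remembering the convention $m_0=\ell$, $m_0^\prime=\ell^\prime$ so that the Gegenbauer parameters and orders in \eqref{eq:Ifunc} match those in \eqref{eqn:spharm}.
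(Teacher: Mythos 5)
Your proposal is correct and follows essentially the same route as the paper's proof: substitute the explicit product form \eqref{eqn:spharm} of the hyperspherical harmonics and the definition \eqref{eq:ffunc} of $f$ into \eqref{eqn:aliasfunct}, reindex the sum over $i$ as the iterated sum over $(k_0,\ldots,k_{d-1})$ via Condition \ref{cond:separability}, and factorise coordinate-by-coordinate into the one-dimensional sums $\Ifunc$ and $\Jfunc$ with the prefactor $\frac{1}{2\pi}\prod_{j=1}^{d-1}\Anorm\Anormprime$. No gap to report.
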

\begin{remark} Loosely speaking, the function $\Alias$ can be rewritten as a chain of products of functions, pairwise coupled by two indexes $m_{j},m_j^\prime$, $j=1,\dots,d-2$. Indeed, as shown by \eqref{eqn:spharm}, each angular component $\thetacord{j}$ is related to two harmonic numbers $m_{j-1}$ and $m_j$. While $\Jfunc$ is concerned with the discretization of components along the azimuthal angle $\varphi$, the factors $\Ifunc$, $j=1,\ldots,d-1$, represent the discretization along the $j$-th component of the vector $\thetacoord$. Finally, the multiplicative factor $\Anorm$ comes from the normalization of hyperspherical harmonics in \eqref{eqn:spharm}. \\
\revise{From now} on, we will refer to $\Ifunc$, for $j=1,\ldots,d-1$, and $\Jfunc$ as the aliasing (function) $j$-th and $d$-th factors respectively.
\end{remark}

\subsection{Aliasing and spherical uniform designs}\label{sub:ali2} As already mentioned in Section \ref{sub:motiv}, the motivations behind the study of this particular setting come from two different sources. On the one hand, the uniform design is largely used in the framework on numerical analysis over the sphere (see \cite{atkinson,stoer,szego}). On the other hand, in the field of mathematical statistics, the spherical uniform sampling has be proved to be the the most efficient design with respect to a large set of optimality criteria such as the Kiefer's $\Phi_p$- as well as the $\Phi_{E_s}$-criteria, in the framework of optimal designs of experiments (see \cite{dettedsphere}). Furthermore, in Remark \ref{rem:comp}, we show that our findings align with the results established  \cite{LiNorth}) for the two-dimensional case. The example described in Section \ref{sec:example} establishes explicitly the set of aliases of a given harmonic coefficient.\\
The main results of this section, stated in the forthcoming Theorem \ref{thm:main}, require some further notation, produced in Remark \ref{rem:preliminary}. 
\begin{remark}\label{rem:preliminary} Let us fix preliminarily $m_0=\ell$. \revise{From now} on, $\mathbf{s}=\left(s_1,\ldots,s_{d-1}\right)\in \mathbb{Z}^{d-1}$ will denote a $\left(d-1\right)$-vector of indices, while $\mathbf{Q}=\left(Q_0,Q_1,\ldots,Q_{d-1}\right)$ is a $d$-vector collecting the cardinality of the quadrature nodes for each angular component in $\left(\thetacoord,\varphi\right)$. Following Lemmas \ref{lemma:sampling} and \ref{lemma:simmetry}, for $\ell\geq 0$ and $\mcoord\in \Mset$, Theorem \ref{thm:main} establishes that the aliases for $\alm$ are identified by the harmonic numbers $\left(\ell^\prime,\mcoord^\prime\right)$, so that $\abs{m_j-m_j^\prime}=2s_j$, $j=0,\ldots,d-1$. The aliases of $\alm$ take thus the form
	\begin{equation*}
	a_{\ell+2s_0, \mcoord+2\mathbf{s}} = a_{\ell+2s_0, m_1+2s_1,\ldots,m_{d-2}+2s_{d-2},m_{d-1}+2rM}, 
	\end{equation*} 
	where the indices $s_0,\ldots,s_{d-1}$ belong to suitable sets defined as follows. For the index $s_0$, we define 
	\begin{equation}\label{eq:Dset}
	D_0=D_0\left(\ell\right)  = \left\{ s_0 \in \mathbb{Z}: s_0 \geq  -\frac{\ell}{2}\right\}. 
	\end{equation}
	Then, for $j=1, \ldots, d-2$, we have that
	\begin{equation}\label{eq:Hset}
	H^{\left(j\right)}_{m_j}\left(m_{j-1}+2s_{j-1}\right) = \left\{s_{j}\in \mathbb{Z}:-\frac{m_j}{2}\leq s_{j}\leq \frac{ \left(m_{j-1}+ 2 s_{j-1}\right)-m_{j} }{2}\right\}.
	\end{equation}
	Finally, the last index $s_{d-1}$, characterizing the trapezoidal rule on $\varphi$, depends on the constant $M$ given in Condition \ref{cond:uni}, so that $s_{d-1} = r M$, where $r$ belongs to the following set,
	\begin{align}\notag
	&R^M_{m_{d-1}} \bra{m_{d-2}+2s_{d-2}}\\ &\quad:=\bbra{r\in \integers :-\frac{\left(m_{d-2}+2s_{d-2}\right) +m_{d-1}}{2M}\leq r \leq \frac{\left(m_{d-2}+2s_{d-2}\right)-m_{d-1}}{2M}}.\label{eq:Rfunc}
	\end{align}
	Notice that for $j=1, \ldots, d-1$ each index $s_j$,  belongs to a set whose size depends on the value of $s_{j-1}$. Furthermore, while $D_0\left(\ell\right)$ provides just a lower bound for $s_0$, each $H^{\left(j\right)}_{m_j}\left(m_{j-1}+2s_{j-1}\right)$, $j=1,\ldots,d-1$, features only finitely many elements.\\ 
	Let us now define the following sets, 	
	\begin{align}
	&A_{0} =A_{0}  \left(\ell,Q_0\right)=\left\{s_{0}\in  \integers: -\frac{\ell}{2}\leq s_0\leq Q_0 - \ell -1\right\}; \label{eq:A0}\\
	&B_{0} =B_{0} \left( \ell,Q_0\right)= \left\{s_{0}\in  \integers: Q_0 - \ell \leq s_0\leq \infty \right\},\label{eq:B0}
	\end{align}
	and, for $j=1,\ldots,d-2$,
	\begin{align}
	&A_{j}=A_{j} \left(m_{j},Q_j\right)=\left\{s_{j}\in \integers: -\frac{m_j}{2}\leq s_j\leq Q_j - m_{j} -1\right\};\label{eq:Aj}\\
	&B_{j}=B_{j} \left( m_{j-1},m_{j},s_{j-1} ,Q_j\right)= \left\{s_{j}\in \integers: Q_j - m_{j} \leq s_{j} \leq \frac{m_{j-1}-m_{j}}{2}+s_{j-1}\right\}.\label{eq:Bj}
	\end{align}
	Observe that the definition of $A_j$ and $B_j$ is formally correct only if $Q_j - m_{j} <\frac{m_{j-1}-m_{j}}{2}+s_{j-1}$, that is, $s_{j-1}>Q_{j}-\frac{m_{j-1}+m_{j}}{2}$. Thus, \revise{from now} on, for $s_{j-1}\leq Q_{j}-\frac{m_{j-1}+m_{j}}{2}$, we consider 
	\begin{align}
	&A_{j}=\left\{s_{j}\in \integers: -\frac{m_j}{2}\leq s_j \leq  \frac{m_{j-1}-m_{j}}{2}+s_{j-1}\right\};\label{eq:Ajbis}\\
	&B_{j}=\emptyset,\label{eq:Bjbis}
	\end{align}
	to take into account all the possible combinations of $s_{j-1}$ and $Q_j$. 
	It is straightforward to observe that 
	\begin{equation*}
	D_0 = A_0\cup B_0, \quad H^{\left(j\right)}_{m_j}\left(m_{j-1}+2s_{j-1}\right)=A_j\cup B_j, \quad \text{for } j=1,\ldots,d-2.
	\end{equation*} 
	Define now the following sets  
	\begin{align}
	\label{eq:H0} 	&H^{\left(j\right);0}_{m_j}\left(m_{j-1}+2s_{j-1}\right) =H^{\left(j\right)}_{m_j}\left(m_{j-1}+2s_{j-1}\right) \cap \left\{s_j \neq 0 \right\};\\
	\label{eq:R0}& R^{M;0}_{m_{d-1}} \bra{m_{d-2}+2s_{d-2}}\cap\left\{r \neq 0 \right\},
	\end{align}
	which are equal to $H^{\left(j\right)}_{m_{j-1},m_j}\left(s_{j-1}\right) $ and $R^{M}_{m_{d-1}} \bra{m_{d-2}+2s_{d-2}}$ respectively, but omitting the null value. Finally, we define, for $j=1,\ldots,d-2$,
	\begin{align}
	\Delta_{j}=&\Delta_{j} \left(m_{j-1}+2s_{j-1},m_{j},Q_{j-1},s_{j-1}\right) \notag\\ 
	=& \left\{s_j \in \integers: s_j \in \left( H^{\left(j\right);0}_{m_{j}}\left(m_{j-1}+2s_{j-1}\right) \ind{s_{j-1} \in A_{j-1}}\right.\right.\notag\\ &\left. \left.+
	H^{\left(j\right)}_{m_{j}}\left(m_{j-1}+2s_{j-1}\right) \ind{s_{j-1} \in B_{j-1}}\right)\right\}.\label{eq:Dother}
	\end{align} 
	while 
	\begin{align}\notag
	\Delta_{d-1} = & 	\Delta_{d-1}\left(m_{d-2}+2s_{d-2},m_{d-1},M,s_{d-2}\right)\\
	=&  \left\{s_{d-1}= Mr; M=Q_{d-1}/2, r \in \integers: r\! \in \!\left( \notag R^{M,0}_{m_{d-1}}\left(m_{d-2}+2s_{d-2}\right)\right. \right.\\ &\left.  \left. \cdot\ind{s_{d-2} \in A_{d-2}}+  R^{M}_{m_{d-1}}\left(m_{d-2}+2s_{d-2}\right)\ind{s_{d-2} \in B_{d-2}}\right)  \right\},\label{eq:Dlast}
	\end{align}
	In other words, when $s_j \in \Delta_j$, it can take any value in $H^{\left(j\right)}_{m_{j-1}}\left(m_{j-1}+2s_{j-1}\right)$ if $s_{j-1}\in B_{j-1}$. Otherwise, if $s_{j-1}\in A_{j-1}$, it can take any value in the set $H^{\left(j\right)}_{m_{j-1}}\left(m_{j-1}+2s_{j-1}\right)$ except to the null value.\\
	We collect these sets together with the notation
	\begin{equation}\label{eqn:zeta}
	Z_{\ell, \mcoord}^{\mathbf{Q}}=\left\{\left(s_1, \ldots, s_{d-1} \right) :s_1 \in \Delta_1, \ldots, s_{d-1} \in \Delta_{d-1};s_1 \geq \ldots \geq s_{d-1}\right\}.
	\end{equation}
	Finally, we define
	\begin{align}\notag
	&\eta\left(\ell, \mcoord;\ell +2s_0, \mcoord+2\mathbf{s}\right)\\
	& \qquad
	=\prod_{j=1}^{d-1} \Anorm  h_{m_{j-1}+2s_{j-1},m_j+2s_j;j}  I_{m_{j-1},m_j}^{Q_{j-1}}\left(m_{j-1}+2s_{j-1},m_{j}+2s_{j} \right),\label{eq:Aliasnew}
	\end{align} 
	where $\Anorm$ and  $I_{m_{j-1},m_j}^{Q_{j-1}}\left(m_{j-1}+2s_{j-1},m_{j}+2s_{j}\right)$ are defined by \eqref{eq:normalizing} and \eqref{eq:Ifunc} respectively, and corresponding to $\Alias$ as given by \eqref{eq:aliasdec}, with $\ell^\prime=\ell+2s_0$, $\mcoord^\prime = \mcoord+2\mathbf{s}$ and $\Jfunc=2\pi$.
\end{remark}
\begin{theorem}\label{thm:main}
	Assuming that Condition \ref{cond:uni} holds, for any $\ell \geq 0$ and $\mcoord \in \Mset$, the aliased harmonic coefficient defined in \eqref{eqn:aliaseries} is given by
	\begin{equation}
	\alalm =   \alm 
	+ \sum_{s_0 \in D_0\left(\ell\right)} \sum_{\mathbf{s}\in Z_{\ell, \mcoord}^{\mathbf{Q}}} \peta a_{\ell+2s_0, \mcoord +2\mathbf{s}}, \label{eq:thmmain}
	\end{equation}
	where $\peta$ is defined by \eqref{eq:Aliasnew}, while the sets $D_0\left(\ell\right)$ and $Z_{\ell, \mcoord}^{\mathbf{Q}}$ are given by \eqref{eq:Dset} and \eqref{eqn:zeta}. 
\end{theorem}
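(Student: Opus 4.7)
The plan is to start from the separable decomposition of $\Alias$ provided by Theorem \ref{thm:auxiliary} and evaluate its factors one by one under Condition \ref{cond:uni}. For the azimuthal factor, substituting the uniform weights and nodes from \eqref{eq:azimuthpoint}--\eqref{eq:azimuthweight} gives a finite geometric sum
\begin{equation*}
\Jfunc = \frac{\pi}{M}\sum_{k=0}^{2M-1} e^{i(m_{d-1}^\prime-m_{d-1})k\pi/M},
\end{equation*}
which equals $2\pi$ whenever $m_{d-1}^\prime-m_{d-1}$ is an integer multiple of $2M$ and vanishes otherwise. This forces the form $m_{d-1}^\prime=m_{d-1}+2Mr$ and produces the set $R^M_{m_{d-1}}$ that controls the last component $s_{d-1}=Mr$ of $\mathbf{s}$ in \eqref{eqn:zeta}.

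Next I would apply Lemma \ref{lemma:simmetry} to each $\Ifunc$: the symmetry of the Gauss-Gegenbauer nodes and weights about $\psi=\pi/2$ (Lemma \ref{lemma:sampling}) forces $m_{j-1}+m_{j-1}^\prime-m_j-m_j^\prime$ to be even whenever $\Ifunc\neq 0$. Starting from the already-even quantity $m_{d-1}^\prime-m_{d-1}=2Mr$ and descending inductively on $j$ from $d-1$ to $1$, evenness of all differences $m_j^\prime-m_j$ propagates. Hence every nonvanishing alias must satisfy $m_j^\prime=m_j+2s_j$ for some $s_j\in\integers$, the admissibility $\ell^\prime\geq 0$ yields $s_0\in D_0(\ell)$, and the nesting constraints $\mcoord^\prime\in\Msetprime$ confine $s_j$ to the finite range $H^{(j)}_{m_j}(m_{j-1}+2s_{j-1})$ for $j=1,\ldots,d-2$.

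The third and most technical step is to analyze each remaining $\Ifunc$ via the change of variable $t=\cos\psi$, absorbing the compensating factor in the weight \eqref{eq:thetawei} so that the quadrature is revealed as a genuine Gauss-Gegenbauer sum with weight $(1-t^2)^{(d-j-1)/2}$. Tracking polynomial degrees, the integrand $(1-t^2)^{(m_j+m_j^\prime)/2}\gegenbauer{m_{j-1}}{m_j}{j}(t)\gegenbauer{m_{j-1}^\prime}{m_j^\prime}{j}(t)$ has total degree $m_{j-1}+m_{j-1}^\prime=2(m_{j-1}+s_{j-1})$, so by the exactness of Gauss-Gegenbauer quadrature on polynomials of degree at most $2Q_{j-1}-1$, the sum coincides with the exact integral precisely when $s_{j-1}\in A_{j-1}$. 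In this exact-quadrature regime the two Gegenbauer polynomials share the same parameter if and only if $s_j=0$, in which case the orthogonality relation \eqref{eq:gegenortho} forces the integral to vanish unless $s_{j-1}=0$ as well. This explains why $s_j=0$ must be excluded from $\Delta_j$ when $s_{j-1}\in A_{j-1}\setminus\{0\}$, whereas for $s_{j-1}\in B_{j-1}$ the quadrature fails to be exact and $\Ifunc$ can be nonzero even for $s_j=0$, recovering $\Delta_j=H^{(j)}_{m_j}(m_{j-1}+2s_{j-1})$.

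Assembling all $d$ factors via \eqref{eq:aliasdec} and isolating the diagonal contribution $s_0=\mathbf{s}=\mathbf{0}$, which produces the coefficient $1$ by spherical-harmonic orthonormality, yields \eqref{eq:thmmain}. The hardest part of the argument is the combinatorial bookkeeping in the third step: reading off the piecewise sets $\Delta_j$ from the nested exact- versus non-exact-quadrature dichotomy along the chain $j=1,\ldots,d-1$, recovering the monotonicity $s_1\geq\ldots\geq s_{d-1}$ appearing in $Z_{\ell,\mcoord}^{\mathbf{Q}}$ from the joint constraints $\mcoord^\prime\in\Msetprime$ and the degree-matching conditions for non-vanishing Gegenbauer integrals, and verifying that the orthogonality \eqref{eq:gegenortho} eliminates exactly the spurious case $s_j=0$ with $s_{j-1}\neq 0$ without discarding any genuine alias.
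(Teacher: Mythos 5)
Your proposal is correct and follows essentially the same route as the paper: it starts from the separable decomposition of Theorem \ref{thm:auxiliary}, evaluates the trapezoidal factor to force $m_{d-1}^\prime=m_{d-1}+2Mr$, propagates evenness backward through the $\thetacord{j}$ components via Lemmas \ref{lemma:sampling} and \ref{lemma:simmetry}, and then uses the exactness threshold of the Gauss--Gegenbauer rule together with the orthogonality relation \eqref{eq:gegenortho} to derive the $A_j$/$B_j$ dichotomy defining the sets $\Delta_j$ and $Z_{\ell,\mcoord}^{\mathbf{Q}}$. This is precisely the two-part structure (parity bounds, then quadrature-order/orthogonality annihilation) of the paper's own argument.
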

\begin{remark}[Comparison with the $2$-dimensional case] \label{rem:comp}
The aliasing effects over $\Stwo$ have been studied by \cite{LiNorth}, involving a trapezoidal rule for the coordinate $\vartheta$ and the Gauss-Laplace quadrature formula for the angle $\vartheta$. More formally, fixed $Q\in \mathbb{N}$, a quadrature formula is obtained by a set of $Q$ points and weights $\left\{\theta_k, w_k:k=0,\ldots, Q-1\right\}$, obtained as in Definition \ref{def:quadrature}. The points $\left\{\theta_k:k=0,\ldots, Q-1\right\}$ are, in this case, the nodes of the Legendre polynomial of order $Q$. Recall that, for $d=2$, $m$ does not identify a vector of harmonic numbers, but just an integer, defined so that $-\ell \leq m\leq \ell$. Thus, the aliases of the harmonic coefficient $a_{\ell,m}$ are given by the following formula, 
\revise{\begin{align*}
	\alalm &=  \sum_{s=-\ell/2}^{Q-\ell-1}\sum_{r\in R_{m}^M\left(\ell+2s\right)} \zeta_{\ell,m} \zeta_{\ell+2s,m+2rM} I_{\ell,m}^{Q}\bra{\ell+2s,m+2rM} a_{\ell+2s,m+2rM} \\
	&+ \sum_{s\geq Q-\ell}\sum_{r\in R_{m}^{M;0}\left(\ell+2s\right)} \zeta_{\ell,m} \zeta_{\ell+2s,m+2rM} I_{\ell,m}^{Q}\bra{\ell+2s,m+2rM} a_{\ell+2s,m+2rM},
	\end{align*} }
where 
\begin{align*}
& \zeta_{\ell,m} = \left( \frac{2\ell+1}{2} \frac{ \left(\ell-m\right)!} {\left(\ell+m\right)!}\right) ^{\frac{1}{2}};\\
& I_{\ell,m}^{Q}\bra{\ell+2s,m+2rM} = \sum_{k=0}^{Q-1} w_k\sin \vartheta_k P_{\ell,m}\bra{\cos \vartheta_k} P_{\ell+2s,m+2rM}\bra{\cos \vartheta_k}.
\end{align*}
Simple algebraic manipulations show that this formula coincides with \eqref{eq:thmmain} claimed in Theorem \ref{thm:main} for $d=2$. 
\end{remark}
\revise{
\begin{remark}[Location and distance of the aliases]\label{rem:dist}
 From now on, we will define the location in the frequency domain of any harmonic coefficient $a_{\ell,\mcoord}$ as the set of numbers $\left\{\ell,\mcoord\right\}$. Following \cite{LiNorth}, we can thus define the distance in the frequency domain of the alias $a_{\ll^\prime,\mcoord^\prime}$ from the original coefficient by $\operatorname{dist}\left(\alm,\alalm\right)=\left\| \left(\ell-\ell^\prime,\mcoord-\mcoord^\prime\right) \right\|_{l^2}$,
where $\left\| \cdot \right\|_{l^2}$ is the Euclidean norm in the space of the square-summable sequences. If the uniform sampling scheme is considered, it follows that 
\begin{equation*} 
	\operatorname{dist}\left(\alm,a_{\ell+2s_0,\mcoord +2\mathbf{s}} \right) = \mathcal{D}\left(s_0,\mathbf{s}\right)= 2\sqrt{\sum_{i=0}^{d-2}s_i + rM}. 
\end{equation*}
Furthermore, from Theorem \ref{thm:main} it follows that $\mathcal{D}\left(s_0,\mathbf{s}\right)>2$. Indeed, the index $r$ can be null only if $s_0 \in B_0$ and, then, $s_0>0$. On the other hand, if there exists an alias with $S_0=\ldots=s_{d-2}=0$, then we have that $r>0$.
\end{remark}
The next result provides some practical rules on the choice of the parameters $Q_0,\ldots,Q_d-2,M$, with the aim to reduce the amount of aliases of a given harmonic coefficient $\alm$ assuming a uniform spherical sampling.
\begin{corollary}\label{cor:aliasing}
	Assume that Condition \ref{cond:uni} holds and that, furthermore, $Q_0 \geq \ldots \geq Q_{d-2}$, while $M>Q_0$. Thus, for any $\alm$, $\ell \in \mathbb{N}, \mcoord \in \mathcal{M}_\ell$, its aliases have locations $\left(\ell+2s_0, \mcoord+2\mathbf{s}\right)$, where $s_i \in B_i$ for $i=0,\ldots,d-2$ and $s_d-1=rM: r \in R_{m_{d-1}^M}$.   
\end{corollary}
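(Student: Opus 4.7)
The plan is to apply Theorem \ref{thm:main}, which parametrizes the aliases of $\alm$ by pairs $(s_0, \mathbf{s})$ with $s_0 \in D_0(\ell) = A_0 \cup B_0$ and each $s_j \in \Delta_j$, where $\Delta_j$ is a subset of $H^{(j)}_{m_j}(m_{j-1}+2s_{j-1}) = A_j \cup B_j$ for $j = 1, \ldots, d-2$, and $\Delta_{d-1}$ parametrizes the range of $r$ in $R^M_{m_{d-1}}$. Under the extra hypotheses $Q_0 \geq \cdots \geq Q_{d-2}$ and $M > Q_0$, the claim reduces to showing $s_i \notin A_i$ for each $i=0,\ldots,d-2$, which is equivalent to $s_i \in B_i$ since $A_i$ and $B_i$ are disjoint.

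I would argue by reverse induction on $i$. For the base case $i = d-2$, suppose $s_{d-2} \in A_{d-2}$. Then $m_{d-2} + 2 s_{d-2} \leq 2Q_{d-2} - m_{d-2} - 2 < 2Q_{d-2} \leq 2Q_0 < 2M$, and since $|m_{d-1}| \leq m_{d-2}$, every $r \in R^M_{m_{d-1}}(m_{d-2} + 2s_{d-2})$ must satisfy $|r| < 1$, forcing $r = 0$. However, the definition of $\Delta_{d-1}$ in \eqref{eq:Dlast} requires $r \in R^{M;0}_{m_{d-1}}$, i.e., $r \neq 0$, whenever $s_{d-2} \in A_{d-2}$. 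This contradiction establishes $s_{d-2} \in B_{d-2}$.

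For the inductive step at index $i \in \{0, \ldots, d-3\}$, assume $s_{i+1}, \ldots, s_{d-2}$ have been shown to lie in $B_{i+1}, \ldots, B_{d-2}$ respectively and suppose toward contradiction $s_i \in A_i$ (or $s_0 \in A_0$ in the boundary case $i=0$). The exclusion rule in \eqref{eq:Dother} forces $s_{i+1} \neq 0$, and combined with $s_{i+1} \in B_{i+1}$ yields $s_{i+1} \geq Q_{i+1} - m_{i+1}$. The ordering $s_i \geq s_{i+1}$ embedded in $Z^{\mathbf{Q}}_{\ell, \mcoord}$ and the upper bound $s_i \leq Q_i - m_i - 1$ then imply $Q_{i+1} - m_{i+1} \leq Q_i - m_i - 1$; since $Q_i \geq Q_{i+1}$ and $m_i \geq m_{i+1}$, a case analysis that also accounts for the alternative upper bound $s_{i+1} \leq (m_i-m_{i+1})/2 + s_i$ in $B_{i+1}$ yields the required contradiction, so that $s_i \in B_i$.

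The principal obstacle is verifying that the chain of inequalities in the inductive step is infeasible in every admissible configuration of $\mcoord$ and $\mathbf{Q}$, including the degenerate cases \eqref{eq:Ajbis}–\eqref{eq:Bjbis} where $B_j$ collapses to the empty set, and the boundary situations where equality $Q_i = Q_{i+1}$ or $m_i = m_{i+1}$ leaves only narrow slack in the bounds; these cases must be handled separately to confirm that no alias with $s_i \in A_i$ survives under the hypotheses.
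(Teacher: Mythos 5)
Your overall strategy---a backward cascade that starts from the constraint on $r$ forced by $M>Q_0$ and then climbs up through the indices using the exclusion rules built into the sets $\Delta_j$---is exactly the route the paper takes: its own proof of Corollary \ref{cor:aliasing} is a two-line appeal to Part 2 of the proof of Theorem \ref{thm:main} (``if $M\geq Q$ it follows that $r=0$'', then combine \eqref{eqn:lago}, \eqref{eqn:lago1} and \eqref{eqn:Id}), and your base case $s_{d-2}\in B_{d-2}$ is a correct and more explicit version of that step.

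The gap is in the inductive step, and it is not merely a verification you have postponed: the deferred case analysis does not close under the hypothesis as stated. From $s_i\in A_i$, $s_{i+1}\in B_{i+1}$ and the bound $s_{i+1}\le \frac{m_i-m_{i+1}}{2}+s_i$ you get $Q_{i+1}\le Q_i-\frac{m_i-m_{i+1}}{2}-1$; this is contradictory precisely when $Q_i=Q_{i+1}$ (it would force $m_i\le m_{i+1}-2$, impossible since $\mcoord\in\Mset$ gives $m_{i+1}\le m_i$), but it is perfectly satisfiable whenever $Q_{i+1}\le Q_i-1$, which the hypothesis $Q_0\ge\cdots\ge Q_{d-2}$ permits. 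Concretely, take $d=3$, $\ell=m_1=m_2=0$, $Q_0=10$, $Q_1=5$, $M>Q_0$: the location $s_0=7\in A_0$, $s_1=5\in B_1$, $r=0$ satisfies every condition defining $Z^{\mathbf{Q}}_{\ell,\mcoord}$, and none of the orthogonality relations \eqref{eqn:Id} kills the corresponding factors (the two Gegenbauer polynomials in $I^{Q_0}_{0,0}(14,10)$ carry different parameters, and the second factor lies outside the exactness range of the quadrature), so this alias with $s_0\in A_0$ survives. No separate handling of boundary cases can rescue the induction; the argument genuinely needs $Q_0=\cdots=Q_{d-2}$, which is what the paper's own proof tacitly assumes by writing a single $Q$, what the example in Section \ref{sec:example} uses, and what Theorem \ref{thm:band} imposes. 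You should either strengthen the hypothesis to equality of the $Q_j$, or acknowledge that under the literal hypothesis only the conclusion $s_{d-2}\in B_{d-2}$ is obtained.
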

\begin{remark}[Categories of locations]\label{rem:locationss}
	In view of Corollary \ref{cor:aliasing}, from now on we will denote the elements belonging to $\left\{s_0 \in B_0, \ldots, s_{d-2} \in B_{d-2}, r \in R_{m_{d-1}^M}\right\}$ as primary locations. The locations of the other aliases belonging to the set $\left\{s_0 \in D_{0}\left(\ell\right), \mathbf{s} \in Z_{\ell,\mcoord}^{\mathbf{Q}}\right\}\backslash\left\{s_0 \in B_0, \ldots, s_{d-2} \in B_{d-2}, r \in R_{m_{d-1}^M}\right\}$ will be labeled as secondary locations. According to Corollary \ref{cor:aliasing}, a proper choice of the sampling points can annihilate the aliases having secondary locations. The same does not hold for the ones in the primary locations. It is indeed impossible to remove all the aliases in primary locations just by choosing the sampling points and parameters. As we will discuss in Section \ref{sec:bandlimited}, these aliases can be completely erased, after a proper selection of sampling points, only if the random field is band-limited.\\
	Finally, note that under the assumptions of Corollary \ref{cor:aliasing}, it holds that 
	\begin{equation}\label{eqn:distdist} 
	\mathcal{D}\left(s_0,\mathbf{s}\right)\geq 2 Q. 
	\end{equation}	
\end{remark}
}  
\section{Aliasing for angular power spectrum}\label{sec:powerspec}
In this section, our purpose is to investigate on the aliasing effects as far as the spectral approximation of an isotropic random field is concerned. More specifically, we establish a method to identify the aliases of each element of the power spectrum $\left\{C_\ell: \ell \geq 0 \right\}$. \\ 
Assume to have an isotropic random field on $\Sd$, so that \eqref{eq:isotropy} and \eqref{eq:covariance} hold. When the integral \eqref{eqn:fieldcoeff} is replaced with the sum \eqref{eqn:aliaseries} under the Condition \ref{cond:uni}, we want to study how the aliasing errors arising in \eqref{eqn:aliaseries}, affect the estimation of $C_{\ell}=\Var{\alm}$ (see \eqref{eq:covariance}). In particular we are \revise{interested in} developing the presence of aliases when $C_\ell$ is approximated by the average
\begin{equation}\label{eq:spectrualias}
	\tilde{C}_{\ell}=\frac{1}{\Xi_d\left(\ell\right)} \sum_{\mcoord \in \Mset} \Var{\alalm},
\end{equation}
where $\Xi_d\left(\ell\right)$ is given by \eqref{eqn:sizespher} (cf, for example, \eqref{eq;clsum}). Let us recall that $D_0\left(\ell\right)$ is given by \eqref{eq:Dset}, and let $V_{\ell, '
	\mcoord} ^Q \left(\ell^\prime\right)$ be defined by
\begin{align*}
 &V_{\ell,
 	\mcoord} ^\mathbf{Q} \left(\ell^\prime\right) \\& =\!\!\! \sum_{\mathbf{s}\in Z_{\ell, \mcoord}^{\mathbf{Q}}}\! \prod_{j=1}^{d-1} \Anorm^2 h_{m_{j-1}+2s_{j-1},m_{j}+2s_{j};j}^2 \left(I_{m_{j\!-\!1},m_{j}}^{Q_{j\!-\!1}}\!\left(m_{j-1}\!+2s_{j-1},m_{j}\!+\!2s_{j}\right)\right)^2\!.
\end{align*}
Our findings, which extend to the $d$-dimensional sphere the outcomes of \cite[Theorem 3.1]{LiNorth} (cf. Remark \ref{rem:comp}), are produced in the following theorem.
\begin{theorem}\label{thm:power}
	Let $T$ be an isotropic random field on $\Sd$ with angular power spectrum given by \eqref{eq:covariance}. Under the assumption given in Condition \ref{cond:uni}, it holds that 
	\begin{equation*}
			\tilde{C}_{\ell} = \sum_{s_0 \in D_0\left(\ell\right)} \Lambda_\ell ^{Q}\left(\ell+2s_0\right) C_{\ell+2 s_{0}},
	\end{equation*}
	where 
	\begin{equation*}
	 \Lambda_\ell ^{Q}\left(\ell+2s_0\right) = \frac{1}{\Xi_{d}\left(\ell\right)} \sum_{\mcoord\in \Mset} V_{\ell, '
	 \mcoord} ^Q \left(\ell+2s_0\right).
	\end{equation*}
\end{theorem}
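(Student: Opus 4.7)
The strategy is to compute $\Var(\alalm)$ directly from the expansion provided by Theorem \ref{thm:main}, exploit the orthogonality of the random harmonic coefficients implied by isotropy, and then average over $\mcoord \in \Mset$.

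First, I would substitute the expansion
\begin{equation*}
\alalm = \alm + \sum_{s_0 \in D_0(\ell)} \sum_{\mathbf{s} \in Z_{\ell,\mcoord}^{\mathbf{Q}}} \peta \, a_{\ell+2s_0, \mcoord+2\mathbf{s}}
\end{equation*}
into the definition \eqref{eq:spectrualias} of $\tilde{C}_\ell$. By the construction of $D_0(\ell)$ and $Z_{\ell,\mcoord}^{\mathbf{Q}}$ in Remark \ref{rem:preliminary}, the alias locations $(\ell+2s_0,\mcoord+2\mathbf{s})$ are pairwise distinct and none coincides with $(\ell,\mcoord)$; coupled with the covariance structure \eqref{eq:covariance} enforced by isotropy, namely $\Cov(a_{\ell,\mcoord},a_{\ell',\mcoord'}) = C_\ell \delta_\ell^{\ell'}\prod_k \delta_{m_k}^{m_k'}$, this forces all cross-covariance terms in $\Var(\alalm)$ to vanish. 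Since $\peta$ is real-valued (a product of the real normalization constants $\Anorm$ and Gegenbauer polynomial values at real quadrature nodes), the variance decomposes by Pythagoras as
\begin{equation*}
\Var(\alalm) = C_\ell + \sum_{s_0 \in D_0(\ell)} \sum_{\mathbf{s} \in Z_{\ell,\mcoord}^{\mathbf{Q}}} \peta^2 \, C_{\ell+2s_0}.
\end{equation*}

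Next, I would apply Tonelli's theorem to interchange the summations over $\mcoord \in \Mset$, $s_0 \in D_0(\ell)$ and $\mathbf{s} \in Z_{\ell,\mcoord}^{\mathbf{Q}}$, and regroup according to the frequency shift $s_0$. By the very definition of $V_{\ell,\mcoord}^{\mathbf{Q}}$, the inner sum over $\mathbf{s}$ equals $V_{\ell,\mcoord}^{\mathbf{Q}}(\ell+2s_0)$, so averaging over $\mcoord$ and dividing by $\Xi_d(\ell)$ produces $\Lambda_\ell^Q(\ell+2s_0)$, giving the target identity once the isolated $C_\ell$ is absorbed into the $s_0 = 0$ summand.

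The principal obstacle is precisely this reconciliation of the isolated $\Var(\alm) = C_\ell$ with the $s_0 = 0$ contribution $\Lambda_\ell^Q(\ell) C_\ell$ in the final formula. It hinges on the identity $\eta(\ell,\mcoord;\ell,\mcoord) = 1$, a consequence of the orthonormality of hyperspherical harmonics guaranteed by Condition \ref{cond:uni} whenever $2\ell \leq 2Q_0$, together with the convention encoded in the sets $A_0$ and $\Delta_1$ that excludes the trivial $\mathbf{s} = 0$ term from the alias double sum when $s_0 \in A_0$. Once this matching is verified, the remainder of the proof is a routine Fubini-type rearrangement of summations and recognition of the quantities $V_{\ell,\mcoord}^{\mathbf{Q}}$ and $\Lambda_\ell^Q$.
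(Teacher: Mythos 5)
Your proposal is correct and follows essentially the same route as the paper: substitute the alias expansion of Theorem \ref{thm:main} into \eqref{eq:spectrualias}, use the uncorrelatedness \eqref{eq:covariance} of the harmonic coefficients to reduce $\Var{\alalm}$ to a weighted sum of $C_{\ell+2s_0}$, then average over $\mcoord\in\Mset$ to recognize $V_{\ell,\mcoord}^{Q}$ and $\Lambda_\ell^{Q}$. You are in fact slightly more careful than the paper's own proof, which silently absorbs the isolated $\Var{\alm}=C_\ell$ term into the $s_0=0$ summand without the explicit reconciliation you describe.
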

The proof of Theorem \ref{thm:power} can be found in Section \ref{sub:main}.
\section{Band-limited random fields} \label{sec:bandlimited}
 In this section, we establish the condition on the sample size,  leading to an exact reconstruction of the harmonic coefficients $\alm$ for band-limited random fields, in the paradigm of the spherical uniform design. In other words, for band-limited random fields and for a suitable choice of $\mathbf{Q}$, the approximation of the integral \eqref{eqn:fieldcoeff} by the sum \eqref{eq:firstalias} is exact and, then, there are no aliases, analogously to the findings described in \cite[Section 4]{LiNorth} for $d=2$. The reader is referred to Section \ref{sub:main} for the proofs of the theorems collected in this section.\\
 If the number of sampling points is sufficiently large with respect to the band-width characterizing the random field, we obtain two crucial results, stated in the next theorem. On the one hand, the band-limited random fields are alias-free in $\alalm$ and, on the other, they are exactly reconstructed by means of the Gaussian quadrature procedure described above. 
\begin{theorem}\label{thm:band}
	Assume that $T\left(x\right)$ is band-limited with bandwidth $L_0$, that is, the harmonic expansion given by \eqref{eq:randomfield} holds. If also Condition \ref{cond:uni} holds, with $Q=Q_0=\ldots=Q_{d-2}>L_0$ and $M>L_0$. Then, it holds that
	\begin{equation}\label{eqn:mainband}
		\alalm=\alm \quad \text{ for }\ell\leq L_0, \mcoord\in\Mset.
	\end{equation} 
Furthermore, for any $L \in \mathbb{N}$ satisfying $Q \geq L \geq L_0$, the following reconstruction holds exactly:
\begin{align}\notag
	T\left(x\right)& = \sum_{k_{0}=0}^{Q_{0}-1} \ldots \sum_{k_{d-1}=0}^{Q_{d-1}-1}  \left(\prod_{j=0}^{d-1} w_{k_{j}}^{\left(j+1\right)} \right)  \left(\prod_{j=1}^{d-1} \left(\sin \thetacord{j}_{k_{j-1}}\right)^{d-j} \right)\\
	&\cdot T\left(\thetacord{1}_{k_0},\ldots, \thetacord{d-1}_{k_{d-2}},\varphi_{k_{d-1}}\right)  \sum_{\ell=0}^{L} K_\ell \left(x,x_{\mathbf{k}} \right),\label{eqn:mainband2}
\end{align} 
where $x_{\mathbf{k}}=\left( \thetacoord_{k_0,\ldots,k_{d-2}} ,\varphi_{k_{d-1}}\right)$  and $ K_\ell$ is given by \eqref{eqn:addition}. 
\end{theorem}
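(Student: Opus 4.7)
The plan hinges on the fact that, when the sampling is dense enough relative to the bandwidth $L_0$, the Gauss-Gegenbauer quadrature and the trapezoidal rule become exact on the polynomial/trigonometric factors entering the definition of $\alalm$, forcing the discrete inner products of two hyperspherical harmonics to coincide with their continuous (Kronecker-delta) counterparts. I would handle \eqref{eqn:mainband} and \eqref{eqn:mainband2} in sequence.

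For \eqref{eqn:mainband}, I would substitute the band-limited expansion \eqref{eq:randomfield} into the definition \eqref{eq:firstalias} of $\alalm$, obtaining
\begin{equation*}
\alalm = \sum_{\ell'=0}^{L_0} \sum_{\mcoord' \in \Msetprime} a_{\primen}\,\sum_{i=1}^N w_i f(\thetacoord_i) Y_{\primen}(x_i) \Yconj(x_i).
\end{equation*}
The inner sum is the discrete analogue of $\langle Y_{\primen}, \Y\rangle_{\Ltwo} = \delta_\ell^{\ell'} \prod_k \delta_{m_k}^{m_k'}$. Theorem \ref{thm:auxiliary} factorizes it into a product of one-dimensional quadratures, $\Jfunc$ in $\varphi$ and $\Ifunc$ in each $\thetacord{j}$. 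For $\ell, \ell' \leq L_0$ the polynomial degrees appearing inside each $\Ifunc$ are bounded by $\ell + \ell' \leq 2L_0 < 2Q$, so each Gauss-Gegenbauer formula integrates exactly against its weight, while $|m_{d-1} - m_{d-1}'| \leq 2L_0 < 2M$ ensures that the trapezoidal sum $\Jfunc$ reproduces the continuous $\varphi$-integral. Together these collapse the discrete inner product to $\delta_\ell^{\ell'} \prod_k \delta_{m_k}^{m_k'}$ (the orthonormality property recorded after Condition \ref{cond:uni}), leaving only the diagonal term and giving $\alalm = \alm$.

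For \eqref{eqn:mainband2}, I would start from the Fourier expansion of $T$, which has only finitely many non-zero terms below $\ell = L_0$, and trivially extend the outer summation up to $\ell = L$:
\begin{equation*}
T(x) = \sum_{\ell=0}^{L} \sum_{\mcoord \in \Mset} \alm \Y(x).
\end{equation*}
By the first part we may replace $\alm$ with $\alalm$ in the range $0 \leq \ell \leq L_0$; in the complementary range $L_0 < \ell \leq L \leq Q$ both $\alm$ and $\alalm$ vanish, because the same discrete-to-continuous orthonormality argument still applies (the polynomial degrees $\ell + \ell' \leq L + L_0 \leq 2Q - 1$ remain within the range of exactness of the Gauss-Gegenbauer rules, and any $r \neq 0$ shift in the trapezoidal rule produces $|m_{d-1}'| \geq 2M - L_0 > L_0 \geq \ell'$, incompatible with $\mcoord' \in \Msetprime$). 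Substituting \eqref{eq:firstalias} for $\alalm$, interchanging the order of summation, and collapsing $\sum_{\mcoord\in \Mset} \Y(x) \Yconj(x_\mathbf{k}) = K_\ell(x, x_\mathbf{k})$ via the addition formula \eqref{eqn:addition} produces the claimed identity.

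The technical heart of the argument, and the step I expect to be the main obstacle, is verifying cleanly that the tensorized discrete inner product of two hyperspherical harmonics matches the continuous one across the whole relevant frequency range. Concretely, one has to track how the polynomial degree $m_{j-1}-m_j$ of each nested Gegenbauer factor in \eqref{eqn:spharm} propagates through $\Ifunc$, and combine the degree bound $\ell + \ell' \leq 2Q - 1$ with the fact that $|m_{d-1}| \leq \ell$, so that the hypothesis $M > L_0$ rules out every spurious $\varphi$-aliasing contribution inside $\Msetprime$. Once this orthonormality is nailed down, both \eqref{eqn:mainband} and \eqref{eqn:mainband2} follow by direct substitution.
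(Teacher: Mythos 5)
Your argument for \eqref{eqn:mainband} is correct and rests on the same two ingredients as the paper's proof (exactness of the Gauss--Gegenbauer rules on polynomials of degree below $2Q$, and the fact that $\abs{m_{d-1}-m_{d-1}^\prime}\leq 2L_0<2M$ kills every $r\neq 0$ term in the trapezoidal factor), but it is organized differently: you show directly that the discrete Gram matrix of the harmonics with $\ell,\ell^\prime\leq L_0$ is the identity, whereas the paper runs the backward, coordinate-by-coordinate elimination of Theorem \ref{thm:main}, checking at each step that every candidate alias has either vanishing intensity (via \eqref{eqn:Id}) or vanishing coefficient (via band-limitedness). Your route is shorter and is essentially the discrete orthonormality statement recorded after Condition \ref{cond:uni}; the one small omission is that quadrature exactness alone does not dispose of the factors $\Ifunc$ in which $m_j+m_j^\prime$ is odd, since the integrand is then not a polynomial in $t=\cos\thetacord{j}$; those factors vanish by the parity argument of Lemma \ref{lemma:simmetry}, not by exactness.

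The genuine gap is in \eqref{eqn:mainband2}, where you need $\alalm=0$ for $L_0<\ell\leq L$. You dismiss the $r\neq 0$ trapezoidal contributions via the bound $\abs{m_{d-1}^\prime}\geq 2M-L_0$. That inequality is wrong: from $m_{d-1}^\prime=m_{d-1}+2rM$ one only gets $\abs{m_{d-1}^\prime}\geq 2M-\abs{m_{d-1}}$, and in this range $\abs{m_{d-1}}$ is bounded by $\ell\leq L\leq Q$, not by $L_0$. Under the stated hypothesis $M>L_0$ alone, $2M-\abs{m_{d-1}}$ can fall below $L_0$, so the trapezoidal factor does not force $r=0$ and the remaining Gegenbauer factor need not vanish either. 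Concretely, take $d=2$, $L_0=1$, $M=2$, $Q=L=10$, and consider $\tau\left(5,-3;1,1\right)$: the azimuthal factor equals $2\pi$ because $1-\left(-3\right)=2M$, while the polar factor is, by exactness, proportional to $\int_{-1}^{1}\left(1-t^2\right)^{2}\gegenbauergen{2}{7/2}\left(t\right)\diff t=\tfrac{7}{2}\int_{-1}^{1}\left(1-t^2\right)^2\left(9t^2-1\right)\diff t\neq 0$, so $\tilde{a}_{5,-3}$ picks up the (generically nonzero) coefficient $a_{1,1}$ and the reconstruction with $L=5$ fails. Closing this step requires $2M>L+L_0$ (for instance $M\geq Q$), not merely $M>L_0$. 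To be fair, the paper's own proof of \eqref{eqn:mainband2} simply asserts $T\left(x\right)=\sum_{\ell=0}^{L}\sum_{\mcoord\in\Mset}\alalm\Y\left(x\right)$ without justifying the vanishing of $\alalm$ on $L_0<\ell\leq L$, so the gap is shared; but your explicit bound is where it surfaces, and you should either strengthen the hypothesis on $M$ or restrict the admissible range of $L$.
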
 
\begin{remark}
	In view of the results presented in Theorem \ref{thm:band}, the sample size $N$ has to satisfy the following condition, 
	\begin{align*}
		N \geq 2 L_0 ^d,
	\end{align*} 
	in order to avoid aliasing effects for band-limited random processes with band-width $L_0$. 
\end{remark}
\revise{
\begin{remark}
	If the random field is band-limited, the only possible aliases belong to secondary locations (see Remark \ref{rem:locationss}). Thus, a suitable choice of the parameters $Q_0,\ldots,Q_{d-2},M$ annihilates all the potential aliases. 
\end{remark}}
A random field has a band-limited power spectrum with bandwidth $P_L$ if $C_\ell = 0$ for any $\ell > P_L$. The following theorem shows that these random fields are aliases-free in $\tilde C_\ell$, employing a Gauss sampling under Condition \ref{cond:uni} and given a suitable sample size. 

\begin{theorem}\label{thm:bandcov}
	Let $T$ be a random field with a band-limited power spectrum with bandwidth $P_L$, sampled by means of a Gauss scheme under Condition \ref{cond:uni}, so that $Q = Q_0 = \ldots = Q_{d-2} \geq M > P_L$. Thus, it holds that
	\begin{equation*} 
		\Var{\alalm} = \Var{\alm} = C_\ell.
	\end{equation*}
\end{theorem}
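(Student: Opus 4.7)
The plan is to reduce the claim to Theorem \ref{thm:band} by showing that a centered random field whose angular power spectrum vanishes above $P_L$ is effectively band-limited, in the almost-sure sense, with the same bandwidth $P_L$. Once this identification is in place, the exact reconstruction formula of Theorem \ref{thm:band} yields $\alalm=\alm$, and the conclusion follows at once by taking variances.

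The reduction runs as follows. Since $T$ is centered, $\Ex\sbra{\alm}=0$ for every $\ell\geq 0$ and $\mcoord\in\Mset$; combined with \eqref{eq:covariance} this gives $\Var{\alm}=C_\ell$. The hypothesis $C_\ell=0$ for $\ell>P_L$ thus forces $\alm=0$ almost surely whenever $\ell>P_L$. Substituting this vanishing into the Fourier decomposition \eqref{eqn:Tproj}--\eqref{eqn:Fourier_expansion} one obtains the a.s.\ representation
\begin{equation*}
T(x)=\sum_{\ell=0}^{P_L}\sum_{\mcoord\in\Mset}\alm\, Y_{\ell,\mcoord}(x),\qquad x\in\Sd,
\end{equation*}
which matches the definition \eqref{eq:randomfield} of a band-limited random field with bandwidth $L_0=P_L$. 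The assumption $Q=Q_0=\ldots=Q_{d-2}\geq M>P_L$ matches exactly the hypotheses of Theorem \ref{thm:band} (with $L_0=P_L$), so that \eqref{eqn:mainband} yields $\alalm=\alm$ for every $\ell\leq P_L$ and $\mcoord\in\Mset$. Taking the variance and using \eqref{eq:covariance} delivers $\Var{\alalm}=\Var{\alm}=C_\ell$ in this range.

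For $\ell>P_L$ one has $C_\ell=0$ by assumption, so the claim amounts to $\Var{\alalm}=0$. Substituting $\alm=0$ a.s.\ for $\ell>P_L$ into the aliasing expansion \eqref{eqn:aliaseries} reduces $\alalm$ to a finite linear combination of the low-frequency $a_{\ell',\mcoord'}$ with $\ell'\leq P_L$, the coefficients being values of the aliasing function $\tau(\ell,\mcoord;\ell',\mcoord')$; the discrete orthogonality of the spherical harmonics under Condition \ref{cond:uni}, valid for $\ell+\ell'\leq 2Q$, kills each such $\tau$ whenever $(\ell,\mcoord)\neq(\ell',\mcoord')$. The principal technical point is therefore only the almost-sure identification of the high-frequency harmonic coefficients with zero; once this has been carried out, the argument is a clean reduction to Theorem \ref{thm:band} followed by a one-line variance computation.
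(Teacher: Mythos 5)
Your proof is correct, but it takes a genuinely different route from the paper's. You reduce the statement to Theorem \ref{thm:band}: since $T$ is centered and $\Var{\alm}=\Ex\sbra{\abs{\alm}^2}=C_\ell$ by \eqref{eq:covariance}, the hypothesis $C_\ell=0$ for $\ell>P_L$ forces $\alm=0$ almost surely, so $T$ is (almost surely) band-limited with bandwidth $P_L$ and \eqref{eqn:mainband} applies verbatim; the variance identity then follows in one line. The paper never passes to this almost-sure pathwise identification: it works entirely at the level of second moments, starting from the variance expansion underlying Theorem \ref{thm:power}, namely $\Var{\alalm}=\sum_{s_0}V_{\ell,\mcoord}^{\mathbf{Q}}\left(\ell+2s_0\right)C_{\ell+2s_0}$, and shows that every term either carries a vanishing $C_{\ell+2s_0}$ (for $s_0\geq (Q-\ell)/2$) or a vanishing aliasing weight, the latter because for the remaining $s_0$ the indices $s_1,\ldots,s_{d-2},r$ are confined to ranges where the quadrature is exact and the orthogonality relation \eqref{eqn:Id} forces $s_j=0$. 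Your argument is shorter and more conceptual, explaining the result as an instance of alias-free reconstruction of band-limited fields, while the paper's computation stays in the spectral domain and exhibits explicitly which aliasing weights survive. One small caveat applies to both: your treatment of $\ell>P_L$ invokes the discrete orthogonality, which is only guaranteed for $\ell+\ell'\leq 2Q$, so the conclusion is really established for frequencies in the observable range (the paper's own proof makes the same implicit restriction by working with $0\leq\ell\leq Q$); this is not a gap specific to your argument.
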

\section{An example}\label{sec:example}
In this section, the reader is provided with an example, with the aim of giving a practical insight on the identification of the aliases of a harmonic coefficient. 
Let us fix $d=3$ and calculate the aliases of the harmonic coefficient $a_{0,0,0}$. Let us assume, furthermore, that $Q=Q_0 = Q_1$. 
\begin{figure}
		\fbox{	\includegraphics[width=\linewidth]{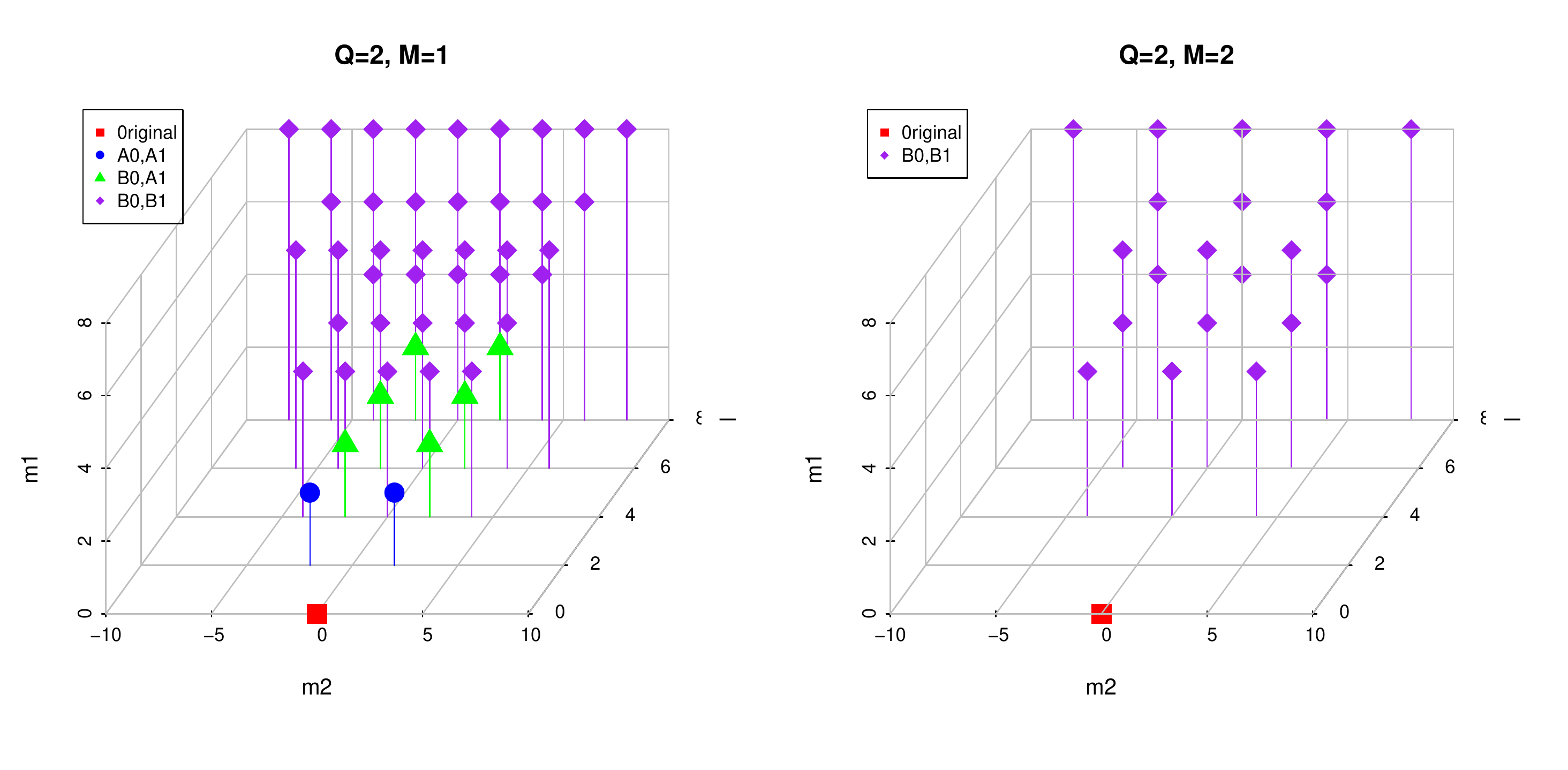}}
		\caption{Coordinates of the aliases for the coefficient $a_{0,0,0}$ for $Q=2$ and $M=1,2$. The left panel shows that this choice of $Q$ yields the presence of aliases with secondary locations. In the right panel these aliases are removed. Indeed, this choice of $Q$ annihilates all the aliases not featuring primary locations.}
		\label{fig:1}
		\end{figure}
We have that
	\begin{align*}
	\tilde{a}_{0,0,0} =&  a_{0,0,0}+ \sum_{s_0 \in D_0} \sum_{\left(s_1,s_2\right) \in Z_{0, 0,0}^{Q}} h_{0,0;1} h_{2s_0,2s_1;1}I_{0,0}^{Q}\left(2s_0,2s_{1}\right)\\ & \cdot  h_{0,0;2} h_{2s_1,2s_2;2}  I_{0,0}^{Q} \left(2s_{1},2s_{2}\right)  a_{2s_0, 2s_1,2s_2}. 
	\end{align*}
	\revise{On the one hand, using \eqref{eq:normalizing} to develop the intensity of the aliases, we obtain}
	\begin{align*}
	&  h_{0,0;1}  = \left(\frac{2}{\pi }\right)^{\frac{1}{2}};\\
	&  h_{0,0;2} = \frac{1}{\sqrt{2}};\\
	&  h_{2s_0,2s_1;1} =\left(\frac{2^{4s_1+1}\left(2s_0-2s_1 \right)!\left(2s_0 +1\right)\Gamma^2\left(2s_1+1\right) }{\pi \left(2s_0+2s_1+1\right)!}\right)^{\frac{1}{2}}\\ & \qqqquad= \left(\frac{2^{4s_1+1}\left(2s_0-2s_1 \right)!\left(2s_0 +1\right)\left(\left(2s_1\right)!\right)^2 }{\pi \left(2s_0+2s_1+1\right)!}\right)^{\frac{1}{2}};\\
	& h_{2s_1,2s_2;2} = \left(\frac{2^{4s_2-1} \left(2s_1-2s_2\right)! \left(4s_1 +1\right)\Gamma^2\left(2s_2+\frac{1}{2}\right) } {\pi \left(2s_1 +2s_2\right)!}\right)^{\frac{1}{2}}\\ & \qqqquad=\left(\frac{ \left(2s_1-2s_2\right)! \left(4s_1 +1\right)\left( \left(4s_2\right)!\right)^2 } { 2^{4s_2+1}\left(2s_1 +2s_2\right)!\left( \left(2s_2\right)!\right)^2}\right)^{\frac{1}{2}},
	\end{align*} 
	so that we can define
	\begin{align*}
	\epsilon_{s_0,s_1,s_2}=&h_{0,0;1}h_{2s_0,2s_1;1}h_{0,0;2}h_{2s_1,2s_2;2}\\
	=&\left(\frac{\left(2s_0-2s_1 \right)! \left(2s_1-2s_2\right)!\left(2s_0 +1\right)\left(4s_1 +1\right)}{ \left(2s_0+2s_1+1\right)! \left(2s_1 +2s_2\right)!}\right)^{\frac{1}{2}}\frac{2^{2\left(s_1-s_2\right)}\left(2s_1\right)!\left(4s_2\right)!}{\pi\left(2s_2\right)!}.
	\end{align*}
		\begin{figure}
		\fbox{\includegraphics[width=\linewidth]{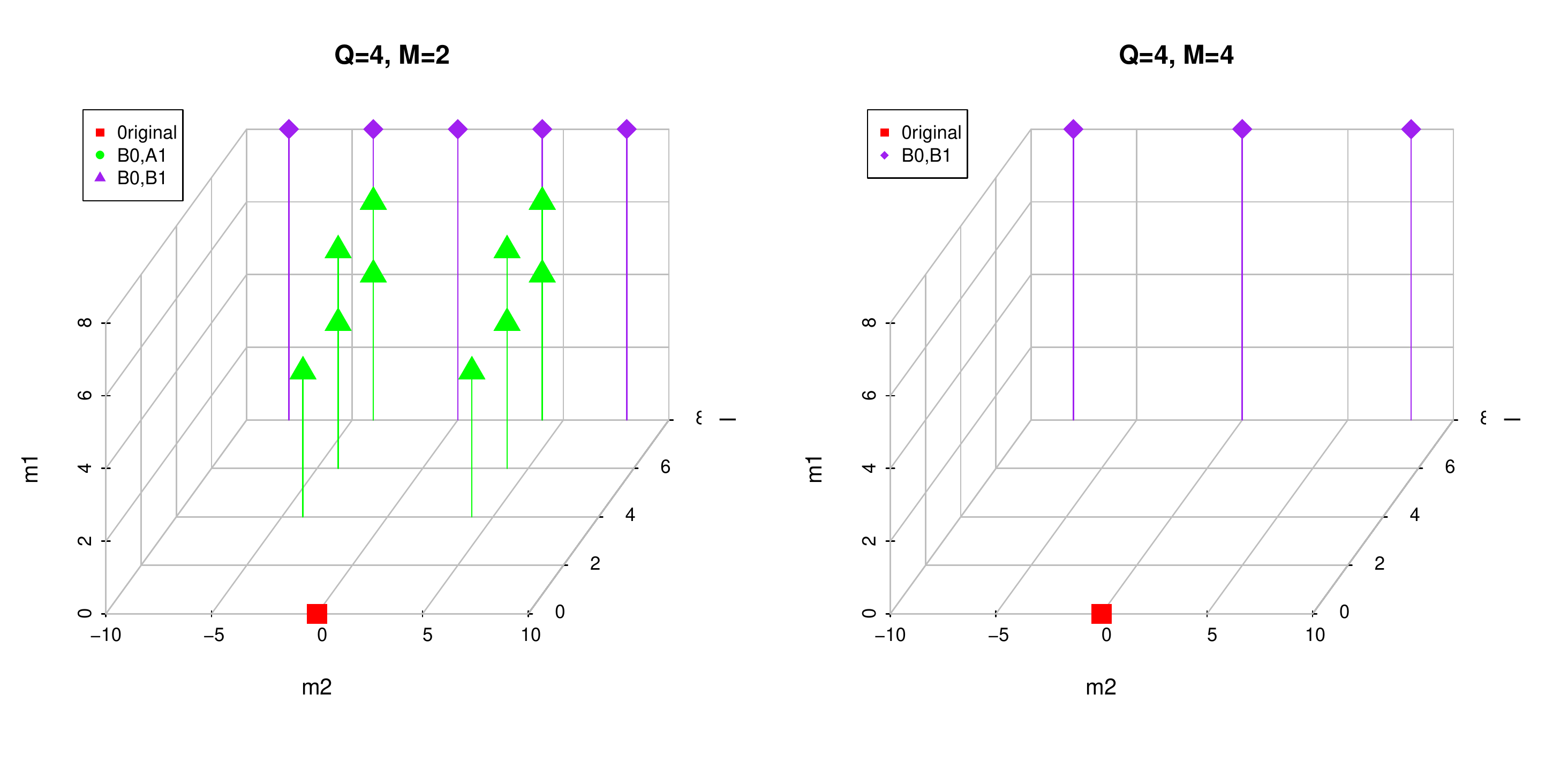}}
		\caption{Coordinates of the aliases for the coefficient $a_{0,0,0}$ for $Q=4$ and $M=1,2$. Also here, in the left panel aliases with secondary locations are detected, because of the choice of $Q$. The right panel features only aliases with primary locations. Indeed, this choice of $Q$ deletes all the aliases with secondary locations.}
		\label{fig:2}
	\end{figure}
			\begin{table}\centering
		\begin{tabular}{|c|c | c | c | }
			\hline 
			& \multicolumn{3}{|c|}{Q=2, M=1} \\ 
			\hline
			{$s_0$} & {$A_0$, $A_1$} & $B_0$, $A_1$& $B_0$, $B_1$\\
			\hline
			1 & $a_{2,2,-2}$, $a_{2,2,2}$ & &\\ 
			\hline
			\multirow{2}{*}{2} &  &{$a_{4,2,-2}$, $a_{4,2,2}$} &$a_{4,4,-4}$,$a_{4,4,-2}$, $a_{4,2,0}$,\\ & & &$a_{4,4,2}$,$a_{4,4,4}$\\ 
			\hline
			\multirow{4}{*}{3} &  &{$a_{6,2,-2}$, $a_{6,2,2}$} &$a_{6,4,-4}$,$a_{6,4,-2}$, $a_{6,2,0}$,\\ & & &$a_{6,4,2}$,$a_{6,4,4}$,$a_{6,6,-6}$,\\
			& & &$a_{6,6,-4}$,$a_{6,6,-2}$,$a_{6,6,0}$,\\ 
			& & &$a_{6,6,2}$,$a_{6,6,4}$,$a_{6,6,6}$\\	
			\hline	
			\multirow{7}{*}{4} &  &{$a_{8,2,-2}$, $a_{8,2,2}$} &$a_{8,4,-4}$,$a_{8,4,-2}$, $a_{8,4,0}$,\\ & & &$a_{8,4,2}$,$a_{8,4,4}$,$a_{8,6,-6}$,\\
			& & &$a_{8,6,-4}$,$a_{8,6,-2}$,$a_{8,6,0}$,\\ 
			& & &$a_{8,6,2}$,$a_{8,6,4}$,$a_{8,6,6}$,\\	
			& & &$a_{8,8,-8}$,$a_{8,8,-6}$,$a_{8,8,-4}$,\\
			& & &$a_{8,8,-2}$,$a_{8,8,0}$,$a_{8,8,2}$,\\ 
			& & &$a_{8,8,4}$,$a_{8,8,6}$,$a_{8,8,8}$, \\
			\hline 	
			\hline 
			& \multicolumn{3}{|c|}{Q=4, M=2} \\ 
			\hline
			{$s_0$} & {$A_0$, $A_1$} & $B_0$, $A_1$& $B_0$, $B_1$\\
			\hline
			1 & & & \\ 
			\hline
			2 &  &$a_{4,4,-4}$,$a_{4,4,4}$ & \\
			\hline
			3 &  &$a_{6,4,-4}$,$a_{6,4,4}$ & $a_{6,6,-4}$,$a_{6,6,4}$\\
			\hline	
			\multirow{2}{*}{4} &  &$a_{8,4,-4}$,$a_{8,4,4}$ & $a_{8,8,-8}$,$a_{8,4,-4}$,$a_{8,8,0}$ \\ & & &$a_{8,8,4}$,$a_{8,8,8}$\\
			\hline 	
		\end{tabular}
		\caption{List of aliases of the coefficient $a_{0,0,0}$ ($Q=2,4$ and $M=\frac{Q}{2}$),  for $s_0=1, \ldots 4$).}\label{tab:1}
	\end{table}
	On the other hand, we obtain from \eqref{eq:Dset}, \eqref{eq:A0},\eqref{eq:B0},\eqref{eq:Aj}, and \eqref{eq:Bj} that
	\begin{align*}
	& D_0 = \left\{ s_0 \in \integers:s_0\geq 0 \right\},\\ 
	& A_0=\left\{ s_0 \in \integers:0 \leq s_0\leq Q-1 \right\}, B_0=\left\{ s_0 \in \integers: s_0\geq Q-1 \right\},\\
	& H^{\left(1\right)}_0\left(2s_0\right) = \left\{ s_1 \in \integers: 0\leq s_1\leq s_0 \right\},\\ & A_1=\left\{ s_1 \in \integers: 0 \leq s_1\leq Q-1 \right\},\\& B_1=\left\{ s_1 \in \integers: Q-1 \leq s_1\leq s_0 \right\}.\\
	& R_{m_2}^M \left(2s_1\right)=\left\{r\in\integers: -\frac{s_{1}}{M}\leq r \leq \frac{s_{1}}{M} \right\}, 
	\end{align*}
	Hence, from \eqref{eqn:zeta} we have that 
	\begin{align*}
	Z_{0, 0,0}^{Q}= &\left\{\left(s_1, r \right) : s_1 \in \left( H^{\left(1\right);0}_{0}\left(2s_{0}\right) \ind{s_{0} \in A_{0}}+
	H^{\left(1\right)}_{0}\left(2s_{0}\right) \ind{s_{0} \in B_{0}}\right), \right.\\
	&  \left. r \in  \left( R^{M,0}_{0}\left(2s_1\right)\ind{s_{1} \in A_{1}} +  R^{M}_{0}\left(2s_1\right)\ind{s_1 \in B_1}\right) \right\}.
	\end{align*}
	We can then rewrite
	\begin{align}
	\tilde{a}_{0,0,0} \notag
	& =  a_{0,0,0}+  \sum_{s_0 =0}^{Q-1}  \sum_{s_1 = 1}^{s_0}  \sum_{\substack{r= -\frac{s_1}{M}\\s_2\neq 0 }} ^{\frac{s_1}{M}}\! \!\!\!\!\epsilon_{s_0,s_1,rM} I_{0,0}^{Q}\left(2s_0,2s_{1}\right)  I_{0,0}^{Q} \left(2s_{1},2rM\right)  a_{2s_0, 2s_1,2rM} \\
	&+\sum_{s_0 \geq Q} \left( \sum_{s_1 = 0}^{Q-1} \sum_{\substack{r= -s_1/M\\s_2\neq 0 }} ^{s_1/M}  \epsilon_{s_0,s_1,rM} I_{0,0}^{Q}\left(2s_0,2s_{1}\right)  I_{0,0}^{Q} \left(2s_{1},2rM\right) \right.\notag\\
	& + \left. \sum_{s_1 = Q}^{s_0}  \sum_{r = -s_1/M} ^{s_1/M} \epsilon_{s_0,s_1,rM}  I_{0,0}^{Q}\left(2s_0,2s_{1}\right)  I_{0,0}^{Q} \left( 2s_{1},2rM \right)  \right) a_{2s_0, 2s_1,2rM}. \label{eq:third}
	\end{align}
	\begin{table}\centering
	\begin{tabular}{|c|c | c | c | }
		\hline 
		& \multicolumn{3}{|c|}{Q=2, M=2} \\ 
		\hline
		{$s_0$} & {$A_0$, $A_1$} & $B_0$, $A_1$& $B_0$, $B_1$\\
		\hline
		1 & & & \\ 
		\hline
		2 & & & $a_{4,4,-4}$,$a_{4,4,-2}$, $a_{4,2,0}$\\
		\hline
		\multirow{2}{*}{3} & & &$a_{6,4,-4}$,$a_{6,4,0}$, $a_{6,4,4}$, \\ & & &$a_{6,6,-4}$,$a_{6,6,0}$, $a_{6,6,4}$  \\
		\hline	
		\multirow{4}{*}{4} & & &$a_{8,4,-4}$,$a_{8,4,0}$, $a_{8,4,4}$ \\ & & & $a_{8,6,-4}$,$a_{8,6,0}$, $a_{8,6,4}$\\
		& & &$a_{8,8,-8}$,$a_{8,8,-4}$, $a_{8,8,0}$ \\ 
		& & & $a_{8,8,4}$,$a_{8,8,8}$ \\	
		\hline 	
		\hline 
		& \multicolumn{3}{|c|}{Q=4, M=2} \\ 
		\hline
		{$s_0$} & {$A_0$, $A_1$} & $B_0$, $A_1$& $B_0$, $B_1$\\
		\hline
		1 & & & \\ 
		\hline
		2 & & & \\
		\hline
		3 & & & \\ 
		\hline	
		4 &  & &$a_{8,8,-8}$,$a_{8,4,0}$,$a_{8,8,8}$ \\
		\hline 	
	\end{tabular}
	\caption{List of aliases of the coefficient $a_{0,0,0}$ ($Q=2,4$ and $M=Q$),  for $s_0=1, \ldots 4$).}\label{tab:2}
\end{table}
	Observe that the first line in \eqref{eq:third} describes the aliases obtained for $s_0 \in A_0$, while the other two lines contain the aliases corresponding to $s_0 \in B_0$. Notice that if $s_0 \in A_0$, then $B_1=\emptyset$. As a consequence, it follows that both the indexes $s_1$ and $s_2$ can not take the null-value. When $s_0 \in B_0$, we have that $A_1= \left\{0,\ldots,Q-1\right\}$ and $B_1=\left\{Q,\ldots,s_0\right\}$. Hence, we obtain the second and the third sums in \eqref{eq:third}.\\
	 
	\revise{We want to establish here the locations of the aliases that affect $a_{0,0,0}$ for some choices of $Q$ and $M$. Let us take $Q=2,4$ and $M=Q/2,Q$. Here, for the sake of the computational simplicity, we will take into account only $s_0 = 1,\ldots, 4$. All the aliases of $a_{0,0,0}$ for the considered range of $s_0$ are collected in Table \ref{tab:1} and in Table \ref{tab:2}. For any choice of $Q$ and $M$, each column contains aliases belonging to the sets $\left\{s_0 \in A_0, s_1 \in A_1\right\}$, $\left\{s_0 \in B_0, s_1 \in A_1\right\}$, and $\left\{s_0 \in B_0, s_1 \in B_1\right\}$ respectively. The locations of the aliases are also shown in Figure \ref{fig:1}, for $Q=2$, and Figure \ref{fig:2}, for $Q=4$.\\ According to the results here produced, we can notice that
		\begin{itemize} 
			\item the minimum distance of the aliases increases when $Q$ grows, following Remark \ref{rem:dist} and Equation \eqref{eqn:distdist} in Remark \ref{rem:locationss}; 
			\item all the aliases with secondary locations (see Remark \ref{rem:locationss}), belonging thus to the subsets $\left\{s_0\in A_0, s_1 \in A_1 \right\}$ and $\left\{s_0\in B_0, s_1 \in A_1 \right\}$, vanish for $M=Q$, as stated in Corollary \ref{cor:aliasing};
			\item the coefficient $a_{0,0,0}$ is not affected by aliasing if it is the harmonic coefficient of a band-limited function with band width $L_0< Q$, as stated in Theorem \ref{thm:band}.
	\end{itemize}	
	}

\section{Proofs}\label{sec:proofs}
In this section, we provide proofs for the main and auxiliary results.
\subsection{Proofs of the main results} \label{sub:main}
\begin{proof}[Proof of Theorem \ref{thm:auxiliary}]
	Using \eqref{eq:ffunc}, \eqref{eqn:spharm}, \eqref{eq:weight} and \eqref{eq:samplingangles} in \eqref{eqn:aliasfunct} yields
	\begin{align*}
	&\Alias \\
	  &\quad = \sum_{k_{0}=0}^{Q_{0}-1} \ldots \sum_{k_{d-1}=0}^{Q_{d-1}-1}  \left(\prod_{j=1}^{d} w_{k_{j-1}}^{\left(j\right)} \right)  \left(\prod_{j=1}^{d-1} \left(\sin \thetacord{j}_{k_{j-1}}\right)^{d-j} \right)\\
	&\quad \quad \cdot \left( \frac{1}{\sqrt{2\pi}} \prod_{j=1}^{d-1} \left( \Anormprime 
	\gegenbauer{m^{\prime}_{j-1}}{m^{\prime}_{j}}{j}\left(\cos \thetacord{j}_{k_{j-1}}\right) \left(\sin \thetacord{j}_{k_{j-1}}\right)^{m^{\prime}_{j}} \right)e^{im_{d-1}^{\prime}\varphi_{k_{d-1}}}\right) \\ 
	&\quad \quad\cdot \left( \frac{1}{\sqrt{2\pi}} \prod_{j=1}^{d-1} \left( \Anorm 
	\gegenbauer{m_{j-1}}{m_{j}}{j} \left(\cos \thetacord{j}_{k_{j-1}}\right)\left(\sin \thetacord{j}_{k_{j-1}}\right)^{m_{j}} \right)e^{im_{d-1}\varphi_{k_{d-1}}} \right) \\
	&=\quad \frac{1}{2\pi} \prod_{j=1}^{d-1} \left( 
	\sum_{k_{j-1}=0}^{Q_{j-1}-1}  w_{k_{j-1}}^{\left(j\right)}   \left(\sin \thetacord{j}_{k_{j-1}}\right)^{m_j+m^\prime_{j} +d-j}  \Anorm \Anormprime  \right. \\ & \quad \quad \left. \cdot \gegenbauer{m_{j-1}}{m_{j}}{j} \left(\cos \thetacord{j}_{k_{j-1}}\right)  \gegenbauer{m^\prime_{j-1}}{m^\prime_{j}}{j} \left(\cos \thetacord{j}_{k_{j-1}}\right)\right)\\
	& \quad \quad \cdot \left(\sum_{k_{d-1}=0}^{Q_{d-1}-1}w_{k_{d-1}}^{\left(d\right)}  e^{i\left(m_{d-1}^{\prime}-m_{d-1} \right)\varphi_{k_{d-1}}}  \right),
	\end{align*}
	as claimed.
\end{proof}

\begin{proof}[Proof of Theorem \ref{thm:main}]
	We divide this proof in two parts. The first part establishes explicit bounds for the indices $s_0,\ldots,s_{d-2},r$ by means of 
	\begin{enumerate}
		\item the parity properties of the Gegenbauer polynomials (see Lemma \ref{lemma:simmetry});
		\item the definition of $\Mset$ (cf. \eqref{eq:Mset}), which exploits the definition of spherical harmonics in \eqref{eqn:spharm}.
	\end{enumerate} 
	The second part of the proof detects then some sets of indices $s_0,\ldots,s_{d-2},r$ for which $\Alias=0$ as a consequence of
	\begin{enumerate}
		\item the order of the quadrature formula (see \eqref{eq:quadrat}).	
		\item  the orthogonality of the Gegenbauer polynomials (see \eqref{eq:gegenortho}); 
	\end{enumerate}
	For both cases, we follow a backward induction step, studying first the aliasing effects due to the trapezoidal sampling for coordinate $j=d$, using the results holding for the $j$-th component to prove the statement for the $j-1$-th component, until we reach $j=1$. \\
	
	\noindent \textit{Part 1 - } Here our purpose it to exploit either properties due to the uniform sampling and the ones related to the harmonic numbers of spherical harmonics, to establish lower and, where possible, upper bounds for the indices $s_0, \ldots, s_{d-2}, r$. These indices identify the aliases of the harmonic coefficient $\alm$, given in the form $a_{\ell+2s_0, \mcoord +2\mathbf{s}}$.\\ Let us consider initially $j=d$ and apply to the coordinate $\varphi$ the standard trapezoidal rule. As well as in \cite{LiNorth} (see also \cite{dettedsphere}), using \eqref{eq:azimuthpoint} and \eqref{eq:azimuthweight} in \eqref{eq:Jfunc} yields
	\begin{equation}\label{eqn:Junif} 
	\Jfunczwei = \frac{\pi}{M} \sum_{q=0}^{2M-1} e^{i\bra{m_{d-1}^{\prime}-m_{d-1}}\frac{q\pi}{M}}=2\pi \delta_{m_{d-1}+2rM}^{m_{d-1}^\prime},
	\end{equation}
	where $r \in \mathbb{Z}$ is such that $\abs{m_{d-1}+2rM}\leq m_{d-2}^{\prime}$.
	Indeed, from \eqref{eqn:spharm} it follows that $Y_{\ell^\prime,\mcoord^\prime} \left(x\right)$ is well-defined only for $\abs{m^\prime_{d-1}}\leq m_{d-2}^\prime$. Thus, it holds that
	$r \in R^M_{m_{d-1}} \bra{m_{d-2}^\prime}$, where
	\begin{equation*}
	R^M_{m_{d-1}} \bra{m_{d-2}^\prime}:=\bbra{r\in \integers :-\frac{m_{d-2}^\prime +m_{d-1}}{2M}\leq r \leq \frac{m_{d-2}^{\prime}-m_{d-1}}{2M}}.
	\end{equation*}
	\noindent Consider now $j=d-1$. The component $\thetacord{d-1}$ is subject to the aforementioned Gauss-Legendre quadrature formula (cf. the case $d=2$ in \cite{LiNorth}). Indeed, by using \eqref{eqn:Junif} jointly with the definition of the sampling points and weights given by \eqref{eq:thetasamp} and \eqref{eq:thetawei} respectively with $j=d-1$, the $(d-1)$-th aliasing factor is given by 
	\begin{align}\notag
	& I_{m_{d-2},m_{d-1}}^{Q_{d-2}} \left(m^\prime_{d-2},m_{d-1}+2rM\right)\\ & \quad \quad =  \sum_{k_{d-2}=0}^{Q_{d-2}-1} w_{k_{d-2}}^{\left(d-1\right)} \left( \sin \vartheta_{k_{d-2}}^{\left(d-1\right)} \right)^{2\left(m_{d-1}+rM\right)+1} C_{m_{d-2}-m_{d-1}}^{\left(m_{d-1}+\frac{1}{2}\right)} \left(\cos \thetacord{d-1}_{k_{d-2}}\right)\notag\\& \quad \quad \quad \cdot C_{m^{\prime}_{d-2}-m_{d-1} - 2rM }^{\left(m_{d-1} + 2rM + \frac{1}{2}\right)} \left(\cos \thetacord{d-1}_{k_{d-2}}\right).\label{eq:dmenodue}
	\end{align}
	Observe now that the Legendre polynomials can be expressed in terms of a Gegenbauer polynomial by means of the formula
	\begin{align*}
	& P_{m_{d-2},m_{d-1}}\left(\cos \thetacord{d-1}_{k_{d-2}}\right) \\
	& \quad \quad \quad=\frac{\left(2 m_{d-1}\right)!}{2^{m_{d-1}}\left( m_{d-1}\right)!} \left(\sin \thetacord{d-1}_{k_{d-2}}\right)^{m_{d-1}}C_{m_{d-2}-m_{d-1}}^{\left(m_{d-1}+\frac{1}{2}\right)} \left(\cos \thetacord{d-1}_{k_{d-2}}\right),
	\end{align*}
	see for example \cite[Formula 4.7.35]{szego}.
	Hence, we obtain that 
	\begin{align}\notag
	& I_{m_{d-2},m_{d-1}}^{Q_{d-2}} \left(m^\prime_{d-2},m_{d-1}+2rM\right) \\& \quad \quad \quad =  c_{m_{d-1}}c_{m_{d-1}+2rM}   \sum_{k_{d-2}=0}^{Q_{d-2}-1} w_{k_{d-2}}^{\left(d-1\right)}  \sin \vartheta_{k_{d-2}}  P_{m_{d-2},m_{d-1}}\left(\cos \thetacord{d-1}_{k_{d-2}}\right)\notag\\ & \quad \quad \quad\quad \cdot P_{m^\prime_{d-2},m_{d-1}+2rM}\left(\cos \thetacord{d-1}_{k_{d-2}}\right)\label{eq:parity1},
	\end{align}
	where 
	\begin{equation*}
	c_m = \left(\frac{\left(2 m\right)!}{2^{m}\left( m\right)!} \right)^{-1}.
	\end{equation*}
	In analogy to \cite[Theorem 2.1]{LiNorth}, using \eqref{eq:parity2}, given in Lemma \ref{lemma:simmetry}, for $j=d-1$, in \eqref{eq:parity1}
	leads to
	\begin{equation*}
	I_{m_{d-2},m_{d-1}}^{Q_{d-2}} \left(m^\prime_{d-2},m_{d-1}+2rM\right) = 0 \quad \text{for any } m_{d-1}^\prime =  m_{d-2} + 2 s_{d-2}+1, s_{d-2} \in \mathbb{N}_{0}.
	\end{equation*} 
	In other words, the $d-1$-th aliasing factor is not null only for even values of  $\abs{m_{d-2}^\prime-m_{d-2}}$, that is,
	\begin{equation*}
	m_{d-2}^\prime = m_{d-2} + 2 s_{d-2}, 
	\end{equation*}
	where $s_{d-2}\in D_{m_{d-2}}$, given by
	\begin{equation*} 
	D_{m_{d-2}} = \left\{s_{d-2}\in \integers: s_{d-2} \geq -\frac{m_{d-2}}{2}\right\},
	\end{equation*}
	which guarantees that $m_{d-2}^\prime\geq 0$ and, thus, a well-defined aliasing factor in \eqref{eq:dmenodue}. \\
	On the one hand, using $m_{d-2}^\prime = m_{d-2}+2s_{d-2}$ in the set concerning the $d$-th aliasing factor, we have that $r \in R_{m_{d-1}}^M\left(m_{d-2}+2s_{d-2}\right)$, as given by \eqref{eq:Rfunc}. \\
	On the other hand, following \eqref{eq:Mset} and \eqref{eqn:spharm},  it holds that $m_{d-2}^\prime=m_{d-2}+2{s_{d-2}}\leq m_{d-3}^\prime$. Thus, $s_{d-2} \in R_{m_{d-2}}\left(m_{d-3}^\prime\right)$,
	where
	\begin{equation*}
	R_{m_{d-2}}\left(m_{d-3}^\prime\right)=\left\{s_{d-2} \in \integers : s_{d-2} \leq \frac{m_{d-3}^\prime - m_{d-2}}{2} \right\}.
	\end{equation*}
	Therefore we obtain that $s_{d-2} \in H^{\left(d-2\right)}_{m_{d-2}}\left(m_{d-3}^\prime\right)$,
	where 
	\begin{equation*}
	H^{\left(d-2\right)}_{m_{d-2}}\left(m_{d-3}^\prime\right)=D_{m_{d-2}}\cap R_{m_{d-2}}\left(m_{d-3}^\prime\right).
	\end{equation*}
	\noindent Consider now $2 \leq j \leq d-2$. For each component, we use a suitable Gauss-Gegenbauer quadrature rule described above (see also \cite[Lemma 3.1]{dettedsphere}). 
	Using Lemma \ref{lemma:simmetry} yields the following outcome. 
	If $I^{Q_{j}}_{m_j,m_{j+1}} \left(m^\prime_j,m^\prime_{j+1}\right) \neq 0$ only when $m_j^\prime=m_j+2s_j$, for $s_j \in H^{\left(j+1\right)}_{m_{j}}\left(m_{j-1}^\prime\right)$, then $\Ifunc \neq 0$ only when  $m_{j-1}^\prime=m_{j-1}+2s_{j-1}$, $s_{j-1} \in H^{\left(j\right)}_{m_{j-1}}\left(m_{j-2}^\prime\right)$. \\
	On the one hand, Formula \eqref{eq:parity3} in Lemma \ref{lemma:simmetry} with $m^\prime_{j}=m_{j}+2s_j$ yields  $I^{Q_{j-1}}_{m_{j-1},m_{j}} \left(m^\prime_{j-1},m_{j}+2s_j\right) \neq 0$ only for $m^\prime_{j-1}=m_{j-1}+2s_{j-1}$, so that the aliases with respect to the $j$-th component are identified by the function
	\begin{align*}
	I^{Q_{j-1}}_{m_{j-1},m_{j}} &\left(m_{j-1}+2s_{j-1},m_{j}+2s_j\right)\\  =  & \sum_{k_{j-1}=0}^{Q_{j-1}-1} w_{k_{j-1}}^{\left(j\right)} \left( \sin \vartheta_{k_{j-1}}^{\left(j\right)} \right)^{2\left(m_{j}+s_j\right)+d-j}  C_{m_{j-1}-m_{j}}^{\left(m_{j}+\frac{d-j}{2}\right)} \left(\cos \thetacord{j}_{k_{j-1}}\right) \\ &\cdot C_{m_{j-1}+2s_{j-1}-\left(m_{j} + 2s_j\right) }^{\left(m_{j} + 2s_j + \frac{d-j}{2}\right)} \left(\cos \thetacord{j}_{k_{j-1}}\right).
	\end{align*}
	It is straightforward to set $s_{j-1} \in D_{m_{j-1}}$, where 
	\begin{equation*}
	D_{m_{j-1}}=\left\{s_{j-1}\in \mathbb{Z}:s_{j-1}\geq -\frac{m_{j-1}}{2}  \right\}, 
	\end{equation*}
	so that the polynomials  in $I^{Q_{j-1}}_{m_{j-1},m_{j}} \left(m_{j-1}+2s_{j-1},m_{j}+2s_j\right)$,
	\begin{align*}
	&\omega_{k_{j-1}}^{\left(j\right)} \left( 1-t_{k_{j-1}}^{\left(j\right)} \right)^{\left(m_{j}+s_j\right)}  C_{m_{j-1}-m_{j}}^{\left(m_{j}+\frac{d-j}{2}\right)} \left(t^{j}_{k_{j-1}}\right) C_{m_{j-1}+2s_{j-1}-\left(m_{j} + 2s_j\right) }^{\left(m_{j} + 2s_j + \frac{d-j}{2}\right)} \left(t^{j}_{k_{j-1}}\right)\\
	& \qquad\quad =w_{k_{j-1}}^{\left(j\right)} \left( \sin \vartheta_{k_{j-1}}^{\left(j\right)} \right)^{2\left(m_{j}+s_j\right)+d-j}  C_{m_{j-1}-m_{j}}^{\left(m_{j}+\frac{d-j}{2}\right)} \left(\cos \thetacord{j}_{k_{j-1}}\right) \\ 
	& \qquad \quad \cdot C_{m_{j-1}+2s_{j-1}-\left(m_{j} + 2s_j\right) }^{\left(m_{j} + 2s_j + \frac{d-j}{2}\right)} \left(\cos \thetacord{j}_{k_{j-1}}\right)
	\end{align*}
	is of degree $m_{j-1} +2s_{j-1}\geq0$.\\
	On the other hand, taking into account \eqref{eq:Mset} and \eqref{eqn:spharm}, it follows that $m_{j-1}^\prime = m_{j-1} + 2s_{j-1} \leq m^{\prime}_{j-2}$. Thus we obtain that  $s_{j-1} \in R_{m_{j-1}}\left(m^{\prime}_{j-2}\right)$, where
	\begin{equation*}
	R_{m_{j-1}}\left(m^{\prime}_{j-2}\right)= \left\{s_{j-1} \in \mathbb{Z}:  s_{j-1} \leq \frac{m^{\prime}_{j-2}-m_{j-1}}{2} \right\},
	\end{equation*}
	with $m_{j-2}^\prime=m_{j-2}+2s_{j-2}$. Combining these two results and recalling \eqref{eq:Hset}, for $j=2,\ldots,d-1$, it holds that
	\begin{equation*}
	s_{j-1} \in H^{\left(j-1\right)}_{m_{j-1}}\left(m_{j-2}^\prime \right), \quad \text{where } 	H^{\left(j-1\right)}_{m_{j-1}}\left(m^\prime_{j-2}\right)= D_{m_{j-1}} \cap  R_{m_{j-1}}\left(m^{\prime}_{j-2}\right).
	\end{equation*}
	Furthermore, the following step of the backward procedure yields $m_{j-2}^\prime=m_{j-2}+2s_{j-2}$, so that 
	\begin{equation*}
	s_{j-1} \in H^{\left(j-1\right)}_{m_{j-1}}\left(m_{j-2}+2s_{j-2}\right), 
	\end{equation*}
	for $j=2,\ldots,d-1$. 
	\noindent Consider, finally, the case $j=1$. This aliasing factor is given by
	\begin{equation*}
	I_{\ell,m_1}^{Q_{0}}\left(\ell^\prime,m_1+2s_1\right) \quad \text{ for } s_1 \in H^{\left(1\right)}_{m_1}\left(\ell^\prime\right). 
	\end{equation*}
	Here we can thus select $\ell^\prime=\ell+2s_0$, $s_0 \in D_0\left(\ell\right)$, where $D_0\left(\ell\right)$ is given by \eqref{eq:Dset}. Note that $s_0$ is the only index that is not selected from a set of finitely many elements.\\
	
	\noindent \textit{Part 2 - } Here our aim is to use the order of the used quadrature formula to convert, when possible, the sums of $\Ifunc$ to integrals. Then, we exploit the orthogonality of the Gegenbauer polynomials (see Section \ref{sec:prel}) to establish further combinations of indices $s_0,\ldots,s_{d-1},r$ which lead to a null aliasing function.\\
	First of all, for any $j=1,\ldots,{d-1}$, as stated in Remark \ref{rem:preliminary}, the following decomposition holds
	\begin{align*} 
	&D_0\left(\ell\right) = A_0 \cup B_0,\\
	&H^{\left(j\right)}_{m_j}\left(m_{j-1}+2s_{j-1}\right) = A_{j}\cup B_{j},
	\end{align*}
	where $A_0$, $B_0$, $A_j$, and $B_j$ are given by \eqref{eq:A0}, \eqref{eq:B0}, \eqref{eq:Aj}, and \eqref{eq:Bj} respectively. Recall also that $A_j$ and $B_j$ are defined by \eqref{eq:Ajbis}, and \eqref{eq:Bjbis} if $s_{j-1}\leq Q_{j}-\frac{m_{j-1}+m_{j}}{2}$. \\
	\noindent Now, let $h_{d-2}:\sbra{-1,1}\rightarrow \reals$ be a polynomial function of degree strictly smaller than $2Q_{d-2}$; hence, by using the aforementioned Gauss-Legendre quadrature formula (of order $2Q_{d-2}$) we obtain that 
	\begin{align}
	\sum_{k_{d-2}=0}^{Q_{d-2}-1} w_{k_{d-2}}^{\left(d-1\right)}  \sin \vartheta_{k_{d-2}}^{\left(d-1\right)} h_{d-2}\left(\cos \thetacord{d-1}_{k_{d-2}}\right) & = \sum_{k_{d-2}=0}^{Q_{d-2}-1} \omega_{k_{d-2}}^{\left(d-1\right)} h_{d-2}\bra{t_p}\notag\\
	& = \int_{-1}^{1}h_{d-2}\bra{t} \diff t. \label{eqn:lago}
	\end{align}
	As a straightforward consequence, (cf. \cite[Section 2.2]{LiNorth}), for $0 \leq m_{d-2} \leq \left(Q_{d-2}-1\right)$ and $s_{d-2} \in \mathbb{Z}\cap \left[-m_{d-2}/2, Q_{d-2}-m_{d-2}-1\right]$, \eqref{eqn:lago} holds with $h_{d-2}\left( t\right) =P_{m_{d-2},m_{d-1}}\left( t\right) P_{m_{d-2}+2s_{d-2},m_{d-1}}\left( t\right)$, a polynomial of degree smaller than  $2Q_{d-2}$. Hence, we obtain that 
	\begin{align*}
	\notag I_{m_{d-2},m_{d-1}}^{Q_{d-2}} \left(m_{d-2}+2s_{d-2},m_{d-1}\right)=& \int_{-1}^{1} P_{m_{d-2},m_{d-1}}\left(t\right) P_{m_{d-2}+2s_{d-2},m_{d-1}}\left(t\right) \diff t \\=&\left( \frac{\left(m_{d-2}-m_{d-1} \right)! }{ \left(m_{d-2}+m_{d-1}\right)} \frac{\left(2m_{d-2} + 1\right)} {2}\right)^{-1}.
	\delta_{s_{d-2}}^{0} 
	\end{align*} 
	Hence, in the uniform sampling approach, all the aliases of $\alm$ corresponding to the values $r=0$ and $-m_{d-2}/2 \leq s_{d-2} \leq Q_{d-2}-m_{d-2}$, $s_{d-2} \neq 0$, are annihilated. Aliases of $\alm$ exist for the following combinations of the indices $s_{d-2},r$:
	\begin{itemize}
		\item $s_{d-2}\in A_{d-2}$ and $r \in  R^{M;0}_{m_{d-1}}\left(m_{d-2}+2s_{d-2}\right)$;
		\item  $s_{d-2}\in B_{d-2}$ and $r \in  R^{M}_{m_{d-1}}\left(m_{d-2}+2s_{d-2}\right)$,
	\end{itemize}
	where $R^{M;0}_{m_{d-1}}\left(m_{d-2}+2s_{d-2}\right)$ is given by \eqref{eq:R0}. Thus, if we define $s_{d-1}=rM$, it holds that $s_{d-1} \in \Delta_{d-1}$, where $\Delta_{d-1}$ is defined by \eqref{eq:Dlast}.\\
	Take now $1\leq j \leq d-2$ and let $h_{j-1}:\sbra{-1,1}\rightarrow \reals$ be a polynomial function of degree strictly smaller than $2Q_{j-1}$. The Gauss-Gegenbauer quadrature rule leads thus to
	\begin{align}
	\sum_{k_{j-1}=0}^{Q_{j-1}-1} w_{k_{j-1}}^{\left(j\right)}\left(  \sin \vartheta_{k_{j-1}}^{\left(j\right)}\right)^{d-j} h_{j-1}\left(\cos \thetacord{j}_{k_{j-1}}\right)& = \sum_{k_{j-1}=0}^{Q_{j-1}-1}  \omega_{k_{j-1}}^{\left(j\right)} h _{ j-1}\bra{ t_{k_{j-1}}^{\left(j\right)} } \notag\\ & \label{eqn:lago1}= \int_{-1}^{1}h_{j-1}\bra{t}\diff t.
	\end{align}
	Then, for $0 \leq m_{j-1}  \leq \left(Q_{j-1}-1\right)$ and $s_{j-1} \in \mathbb{Z}\cap \left[\left.-m_{j-1}/2, Q_{j-1}-m_{j-1}\right)\right.$, \eqref{eqn:lago1} holds with 
	\begin{equation*}
	h_{j-1}\left( t\right) = \left( 1-t^2\right)^{\left(m_{j}+s_j\right)}  C_{m_{j-1}-m_{j}}^{\left(m_{j}+\frac{d-j}{2}\right)} \left(t\right) C_{m_{j-1}+2s_{j-1}-\left(m_{j} + 2s_j\right) }^{\left(m_{j} + 2s_j + \frac{d-j}{2}\right)} \left(t\right),
	\end{equation*} a polynomial of degree $ 2\left(m_{j-1}+s_{j-1}\right)<2Q_{j-1}$. Hence, from the orthogonality of the Gegenbauer polynomials (cf. \eqref{eq:gegenortho}), it follows that
	\begin{align}
	\notag 	& I^{Q_{j-1}}_{m_{j-1},m_{j}} \left(m_{j-1}+2s_{j-1},m_{j}\right) \\& \qqqquad =   \int_{-1}^{1} \gegenbauer{m_{j-1}}{m_{j}}{j}\left(t\right) \gegenbauer{m_{j-1}+2s_{j-1}}{m_{j}}{j}\left(t\right) \left(1-t^2\right)^{m_j+\frac{d-j-1}{2}} \notag \\ & \qqqquad =\frac{\pi 2^{1-2\left(m_j+\frac{d-j}{2}\right)} \Gamma\left(m_{j-1}+m_{j}+d-j\right)}{\left(m_{j-1}-m_{j}\right)!\left(m_{j-1}+\frac{d-j}{2}\right)\Gamma^2\left(\left(m_j+\frac{d-j}{2}\right)\right)} 
	\delta_{s_{j-1}}^{0}\label{eqn:Id}.
	\end{align} 
	Thus, $I^{Q_{j-1}}_{m_{j-1},m_{j}} \left(m_{j-1}+2s_{j-1},m_{j}\right)$ is annihilated for $s_j=0$ and $-m_{j-1}/2 \leq s_{j-1} \leq Q_{j-1}-m_{j-1}$, $s_{j-1} \neq 0$. For any $j=1,\ldots,d-2$, aliases $a_{\ell+s_0,\mcoord+\mathbf{s}}$ exist for
	\begin{itemize}
		\item $	s_{j-1} \in A_{j-1}$
		and $s_{j} \in H^{\left(j\right);0}_{m_{j}}\left(m_{j-1}+2s_{j-1}\right)$ ;
		\item $s_{j-1}\in B_{j}$ and $s_{j} \in H^{\left(j\right)}_{m_{j}}\left(m_{j-1}+2s_{j-1}\right)$, 
	\end{itemize}
	where $H^{\left(j\right);0}_{m_{j}}\left(m_{j-1}+2s_{j-1}\right)$ is given by \eqref{eq:H0}. In other words, for any $j=1,\ldots,d-2$, it holds that $s_{j} \in \Delta_{j}$, where $\Delta_{j}$ is defined by \eqref{eq:Dother}.\\
	\noindent Recombining all these results for $j=1,\ldots,d$ yields the fact that the aliases $a_{\ell+2s_0, \mcoord +2\mathbf{s}}$ exist for $\mathbf{s} \in Z^{\mathbf{Q}}_{\ell, \mcoord}$, where $Z^{\mathbf{Q}}_{\ell, \mcoord}$ is defined by \eqref{eqn:zeta}, as well as for $s_0 \in D_0\left(\ell\right)$ (cf. Part 1), as claimed.
\end{proof}
\begin{proof}[Proof of Theorem \ref{thm:power}]
	Let us fix $\ell\geq 0$ and $\mcoord \in \Mset$, and recall furthermore that the random variables $\left\{a_{\ell + 2s_0, \mcoord + \mathbf{s}}, s_0 \in D_0\left(\ell\right), \mathbf{s}\in Z_{\ell,m}^{\mathbf{Q}}\right\}$ are uncorrelated with variance $C_{\ell+2s_0}$. The variance of $\alalm$ is, thus, given by 
	\begin{align*}
	\Var{\alalm} &= \sum_{s_0 \in D_0\left(\ell\right)} \sum_{\mathbf{s} \in Z_{\ell, \mcoord}^{\mathbf{Q}}} \left( \prod_{j=1}^{d-1} \Anorm^2 h_{m_{j-1}+2s_{j-1},m_{j}+2s_{j};j}^2\right.\\ & \quad \left. \cdot\left(I_{m_{j-1},m_{j}}^{Q_{j-1}}\left(m_{j-1}+2s_{j-1},m_{j}+2s_{j}\right)\right)^2 \right) \\&\quad \cdot 
	\Var{a_{\ell + 2s_0, m_1+2s_1, \ldots, m_{d-1}+2s_{d-1}}} \\
	&= \sum_{s_0 \in D_0\left(\ell\right)} \sum_{\mathbf{s} \in Z_{\ell, \mcoord}^{\mathbf{Q}}} \left( \prod_{j=1}^{d-1} \Anorm^2 h_{m_{j-1}+2s_{j-1},m_{j}+2s_{j};j}^2\right.  \\&\quad \left.\cdot  \left(I_{m_{j-1},m_{j}}^{Q_{j-1}}\left(m_{j-1}+2s_{j-1},m_{j}+2s_{j}\right)\right)^2 \right) C_{\ell+2s_0} \\
	&= \sum_{s_0 \in D_0\left(\ell\right)} V_{\ell, '
		\mcoord} ^Q \left(\ell^\prime\right) C_{\ell+2s_0}.
	\end{align*}
	Using this result in \eqref{eq:spectrualias} completes the proof.
\end{proof}
\begin{proof}[Proof of Theorem \ref{thm:band}]
	First of all, let us consider the harmonic coefficient $\alm$ and study its aliases, denoted by $a_{\ell^\prime, \mcoord^\prime}$, under Condition \ref{cond:uni}, with $Q=Q_0=\ldots=Q_{d-2}>L_0$ and $M>L_0$. For any $\ell^\prime \geq m^\prime_1\geq \ldots \geq m^\prime_{d-2}$, note that
	\begin{equation*} 
	a_{\ell^\prime,\mcoord^\prime}= a_{\ell^\prime,m^\prime_1,\ldots,m^\prime_{d-2},m_{d-1}^\prime} = 0, \quad \text {for any } m_{d-1}^\prime>M>L_0.
	\end{equation*}
	Thus $a_{\ell,m_1,\ldots,m_{d-2},m_{d-1}+2rM} = 0$ for any $r \neq 0$.
	Recalling that 
	\begin{equation*}
	a_{\ell^\prime,m^\prime_1,\ldots,m^\prime_{d-2},m_{d-1}} \quad \text{ for any } m^\prime_{d-2}\geq Q> L_0,
	\end{equation*}
	we obtain that 
	\begin{equation*}
	a_{\ell^\prime,m^\prime_1,\ldots,m_{d-2}+2s_{d-2},m_{d-1}} = 0 \quad \text{ for any } s_{d-2} \geq Q-m_{d-2}. 
	\end{equation*}
	Using now \eqref{eqn:Id} leads to $s_{d-2}=0$. Reiterating this backward procedure for the other harmonic numbers $m_{j}^\prime$, $j=d-3,\ldots,1$ and $\ell^\prime$ yields \eqref{eqn:mainband}.\\
	To prove \eqref{eqn:mainband2}, it suffices to use the band-width in the expansion \eqref{eqn:Fourier_expansion}, that is,
	\begin{equation*}
	T\left(x\right) = \sum_{\ell=0}^{L} \sum_{m\in \Mset }\alalm\Y\left(x\right).
	\end{equation*}
	Using now in the equation above \eqref{eqn:aliaseries}, \eqref{eq:thmmain}, and \eqref{eqn:mainband} yields the claimed result.
\end{proof}
\begin{proof}[Proof of Theorem \ref{thm:bandcov}]
	First, since the power spectrum is band-limited, it holds that $C_{\ell+2s_0} = 0$ for $s_0 \geq (Q-\ell)/2$. Furthermore, for $0\leq \ell \leq Q$ and $\mcoord\in \Mset$, if $s_0 \in \left[-\ell/2,\left(Q-\ell \right)/2-1\right]$,  we obtain that
	\begin{align*}
	s_1 \in \left[-m_1/2,\frac{\ell-m_1}{2}+s_0\right] \subseteq \left[-m_1/2,\frac{Q-m_1}{2}-1\right].
	\end{align*}
	Consequently, simple algebraic manipulations leads to 
	\begin{equation*}
	s_j \in \left[-m_j/2,\frac{\ell-m_j}{2}+s_{j-1}\right] \subseteq \left[-m_j/2,\frac{Q-m_j}{2}-1\right],
	\end{equation*}
	for any $j=1,\ldots,d-2$.\\
	Thus, it follows that, for $s_{d-2}  \in \left[-m_d-2/2,\frac{Q-m_d}{2}-1\right]$ and $Q \geq M > P_L$, $R^M_{m_{d-1}}\left(m_{d-2}+s_{d-2}\right)=\left\{0\right\}$, and, then, $r=0$. Then, by using \eqref{eqn:Id} backward from $j=d-2$ to $j=1$ with any element of the product in \eqref{eq:Ifunc} yields $s_j =0$ for $j=0,\ldots,d-2$. It follows that $V_{\ell, 
		\mcoord} ^\mathbf{Q} \left(\ell^\prime\right)=0$ and $\Var{\alalm} = C_\ell=\Var{\alm}$, as claimed.
\end{proof}
\subsection{Proofs of the auxiliary results}\label{sub:proofaux}
\begin{proof}[Proof of Lemma \ref{lemma:sampling}]
	The symmetry of the sampling angles follow the symmetry of the roots of the Gegenbauer polynomials. Furthermore, note that 
	\begin{equation*}
	\sin \thetacord{j}_{Q_{j-1}-k_{j-1}-1}= \sin \left(\pi -  \thetacord{j}_{k_{j-1}} \right) = \sin \thetacord{j}_{k_{j-1}} .
	\end{equation*}
	Then, we have that
	\begin{align*}
	&\omega_{Q_{j-1}-k_{j-1}-1}^{\left(j\right)} \\ & \qquad= \frac{1}{\int_{-1}^{1} \left(1-t^2\right)^{d-1-j} \diff t}	\int_{-1}^{1} \left(1-t^2\right)^{d-1-j} \lambda_{Q_{j-1}-k_{j-1}-1}\left(t\right) \diff t\\
	 & \qquad=\frac{1}{\int_{-1}^{1} \left(1-t^2\right)^{d-1-j} \diff t}	\int_{-1}^{1} \left(1-t^2\right)^{d-1-j} \!\!\!\! \prod _{i=0, \atop i\neq \left(Q_{j-1}-k_{j-1}-1\right)}^{r-1}\! \! \! \! \frac{t-t_i}{t_i-t_{Q_{j-1}-k_{j-1}-1}} \diff t\\
	 & \qquad=\frac{1}{\int_{-1}^{1} \left(1-t^2\right)^{d-1-j} \diff t}	\int_{-1}^{1} \left(1-t^2\right)^{d-1-j} \prod _{i=0,\atop i\neq \left(k_{j-1}\right)}^{r-1} \frac{t-t_i}{t_i-t_{k_{j-1}}} \diff t\\
	 & \qquad=	\omega_{k_{j-1}}^{\left(j\right)}.
	\end{align*}
	so that $	\wpeso{k_{j-1}}{j} = \wpeso{Q_{j-1}-k_{j-1}-1}{j}$, as claimed.
\end{proof}
\begin{proof}[Proof of Lemma \ref{lemma:simmetry}]
	First of all, note that this result for $d=2$, involving thus Legendre polynomials, has been already claimed in \cite[Theorem 2.1]{LiNorth}. \\
	As far as $d>2$ is concerned, let us preliminarily recall that, for $t\in \left[-1,1\right]$, $\gegenbauergen{n}{\alpha}\left(-t\right)= \bra{-1}^{n} \gegenbauergen{n}{\alpha}\left(t\right)$ (see, for example, \cite[Formula 4.7.4]{szego}). Thus, simple trigonometric identities yield
	\begin{align*}
	G_j\left(\pi - \psi\right) & = \gegenbauer{m_{j-1}}{m_{j}}{j}\left(\cos\left(\pi-\psi\right)\right)\gegenbauer{m^\prime_{j-1}}{m^\prime_{j}}{j}\left(\cos\left(\pi-\psi\right)\right) \sin\left(\pi-\psi\right)^{d-j}\\
	& =\gegenbauer{m_{j-1}}{m_{j}}{j} \left(-\cos\psi\right) \gegenbauer{m^\prime_{j-1}}{m^\prime_{j}}{j}\left(-\cos \psi\right) \left(\sin\psi\right)^{d-j}\\
	& = \bra{-1}^{m_{j-1}+m_{j-1}^\prime-m_{j}-m_{j}^\prime} \gegenbauer{m_{j-1}}{m_{j}}{j}\left(\cos\psi\right)\\ & \, \, \,\,  \cdot \gegenbauer{m^\prime_{j-1}}{m^\prime_{j}}{j}\left(\cos \psi\right) \left(\sin\gegenbauer{m^\prime_{j-1}}{m^\prime_{j}}{j}\right)^{d-j}\\
	& = \bra{-1}^{m_{j-1}+m_{j-1}^\prime-m_{j}-m_{j}^\prime}G_j\left( \psi\right),
	\end{align*}
	as claimed. \\
	In order to prove \eqref{eq:parity3}, consider initially only even values of $Q$. Hence, by means of Lemma \ref{lemma:sampling}, we have that 
	\begin{align*}
	\notag			\sum_{k=0}^{Q-1} w_k G_j\left(\psi_k\right) = & \sum_{k=0}^{\left[Q/2\right]}\left(w_k G_j\left(\psi_k\right) + w_{Q-k-1}G_j\left(\psi_{Q-k-1}\right) \right)\\
	\notag			=& \sum_{k=0}^{\left[Q/2\right]}w_{k}\left(G_j\left(\psi_k\right) + G_j\left(\pi - \psi_{k}\right) \right)\\
	\notag
	=& \sum_{k=0}^{\left[Q/2\right]}w_k\left(G_j\left(\psi_k\right) + \left(-1\right)^{2c+1}G_j\left(\psi_{k}\right) \right)=0.
	\end{align*}
	Moreover, if $Q$ is odd, since sampling points have to be symmetric with respect to $\pi/2$, the  additional point with respect to the previous case has to coincide with $\pi/2$. Thus $G\left(\pi/2\right)=0$ and \eqref{eq:parity3} holds, as claimed.  
\end{proof}
\revise{\begin{proof}[Proof of Corollary \ref{cor:aliasing}] This proof follows directly from the proof of Theorem \ref{thm:main}--Part 2. Indeed, if $M\geq Q$, it follows that $r=0$. Then, combining \eqref{eqn:lago}, \eqref{eqn:lago1} and $\eqref{eqn:Id}$ yields the claimed result.
\end{proof}}
\bibliography{bibliografia}
\end{document}